\documentclass[11pt]{article}

\usepackage{amssymb, graphicx}
\usepackage[bookmarksnumbered, colorlinks, plainpages]{hyperref}
\hypersetup{colorlinks=true,linkcolor=cyan, citecolor=cyan, urlcolor=red, pdftoolbar=true}
\usepackage[centertags]{amsmath}
\usepackage{mathrsfs}
\usepackage{amsthm}
\allowdisplaybreaks[4]
\usepackage[top=2cm,bottom=2cm,left=3cm,right=3cm,includehead,includefoot]{geometry}
\usepackage{indentfirst}
 \newtheorem{thm}{Theorem}[section]
 
 \newtheorem{lem}[thm]{Lemma}
 
 \theoremstyle{definition}
 
 \theoremstyle{remark}

 \numberwithin{equation}{section}
\theoremstyle{plain}
\newtheorem{remark}{Remark}

\begin{document}
\title{{ Compact linear combination of composition operators on weighted Bergman-Orlicz spaces induced by doubling weights \thanks{ This work is supported by the National Natural Science Foundation of China (12171138,12371129)} }
\author{Yucheng Li$^{a}$,\, Zhiyu Wang$^{a}$\thanks{Corresponding author: wangzhiyumath@163.com} and Liankuo Zhao$^{b}$ \\
{\small \texttt{liyucheng@hebtu.edu.cn,\,\,wangzhiyumath@163.com,\,\,lkzhao@sxnu.edu.cn}}\\
  \small {$^{a}$\,\,\,\,School of Mathematical Sciences, Hebei Normal University, Shijiazhuang 050024,  China  }\\
  \small {Hebei Key Laboratory of Computational Mathematics and Applications } \\
\small {$^{b}$\,\,\,\,School of Mathematical Sciences, Shanxi  Normal University, Taiyuan 030031,  China}}}

\date{}
\maketitle

\textbf{Abstract}\    Let  $A^{\Psi}_{\omega}(\mathbb{D})$ be the weighted Bergman-Orlicz spaces induced by doubling weights $\omega$ on the unit disk $\mathbb D$. Suppose  Orlicz function $\Psi$ satisfies the $\Delta_{2}$-condition. In this paper, the compactness of finite linear combination of composition operators on  $A^{\Psi}_{\omega}(\mathbb{D})$ are completely characterized in terms of Julia-Carathéodory type conditions and vanishing Carleson measure, respectively. 

\textbf{Keywords}\  weighted Bergman-Orlicz space, composition operator, compactness, doubling weight

\textbf{Mathematics Subject Classification}\ 30H20, 46E30, 47B33

\section{Introduction}

Let $\mathbb{D}$ be the open unit disk of the complex plane $\mathbb{C}$ and $H(\mathbb{D})$ denote the spaces of all holomorphic functions on $\mathbb{D}$.
An Orlicz function is a nonnegative convex function $\Psi:[0,\infty)\to[0,\infty)$ satisfying $\Psi(0)=0$ and $\Psi(t)>0$ for all $t>0$.
We say that $\Psi$ satisfies the $ \Delta_{2}$-condition if there exists a constant $C>1$ such that 
\begin{align} \label{eq: condition}            
  \Psi (2t) & \leq C \Psi (t), \qquad t>0.
\end{align}
 An Orlicz function $ \Psi$  is said to be of positive upper type  $p\, (p>0)$(see \cite{YLK}), if there exists a constant $C>0$  such that
\begin{align}\label{eq: positive upper type}
  \Psi(st) & \leq Cs^{p}\Psi(t), \qquad t>0, \,\,\,s\geq1.
\end{align}
A direct calculation shows that \eqref{eq: condition} and \eqref{eq: positive upper type} equivalent.
We also recall the following observation concerning the $\Delta_{2}$-condition (see Remarks 2(a) following Theorem 4.11 in {\rm\cite{LLQR2}}).

\begin{remark}
Let $\Psi$ be an Orlicz function satisfying the $ \Delta_{2}$-condition. Then for every $B>0$, there exists a constant $C=C_{B}>0$ such that
\begin{align}\label{eq: remark 1}
   \frac{1}{C}\Psi(t) & \leq\Psi(Bt)\leq C \Psi(t), \qquad t>0.
\end{align}
\end{remark}

Let $\omega$  be a positive integrable weight function on $\mathbb{D}$ and $ {\rm d}A(z)=\frac{1}{\pi}{\rm d}x{\rm d}y$ be the normalized area measure on $\mathbb{D}$. The weighted Bergman-Orlicz space $A^{\Psi}_{\omega}(\mathbb{D})$ consists of all $f\in H(\mathbb{D})$ such that there exists a constant $C>0$ satisfying
$$\int _ {\mathbb{D}} \Psi \bigg( \frac { | f (z) | } { C } \bigg) \omega(z) {\rm d}A (z) < + \infty.$$ 
It is equipped with the Luxemburg norm  
$$\|f\|_{A^{\Psi}_{\omega}(\mathbb{D})}=\inf\bigg\{C>0: \int _ { \mathbb{D} } \Psi \bigg( \frac { | f (z) | } { C } \bigg) \omega(z) {\rm d} A (z) \leq1\bigg\}.$$ 
Note that when $\Psi(t)=t^p$ for $1\leq p<\infty$, the space $A^{\Psi}_{\omega}(\mathbb{D})$ coincides with the weighted Bergman space $A^{p}(\omega)$. In particular, for $\omega(z)=(\alpha+1)(1-|z|^2)^\alpha$ with $\alpha>-1$, we write $A_\alpha^p(\mathbb{D})$ for $A^{p}(\omega)$. For further forms of Orlicz functions $\Psi$, we refer the readers to \cite{LLQR2,RR} .
We next recall the weight classes considered in this paper.

A weight $\omega:\mathbb{D}\to[0,\infty)$ is called radial if $\omega(z)=\omega(|z|)$ for all $z\in\mathbb{D}$. For a radial weight, set 
\begin{align}\label{eq:omega-hat}
  \widehat{\omega}(r) & =\int_r^1 \omega(s)\,ds, \qquad 0\leq r<1, 
\end{align}
and assume $\int_r^1 \omega(s)\,ds>0$ for any $0\leq r<1$.
A radial weight $\omega\in\widehat{\mathcal{D}}$ if there exists a constant $C=C(\omega)>1$ such that
\begin{align*}
  \widehat{\omega}(r) & \leq C\widehat{\omega}\left(\frac{1+r}{2}\right), \qquad 0\leq r<1.
\end{align*}
Moreover, we say that $\omega\in\check{\mathcal{D}}$ if there exist $K=K(\omega)>1$ and $C=C(\omega)>1$ such that
\begin{align*}
  \widehat{\omega}(r) & \geq C\widehat{\omega}\left(1-\frac{1-r}{K}\right), \qquad 0\leq r<1.
\end{align*}
Let $\mathcal{D}=\widehat{\mathcal{D}}\cap\check{\mathcal{D}}$.
We say that $\omega$ is a doubling weight if $\omega\in \mathcal{D}$.
For more details on $\widehat{\mathcal{D}}$,  $\check{\mathcal{D}}$ and the operator theory on weighted Bergman spaces induced by these classes of weights, see \cite{LRW,PJ,PR,PR2,PRS}.

Let $S(\mathbb{D})$ be the set of all holomorphic self-maps on $\mathbb{D}$. For $\varphi\in S(\mathbb{D})$, the composition operator
is defined by 
 $$C_{\varphi}f=f\circ\varphi,   \qquad f\in H(\mathbb{D}).$$
Composition operators have been extensively studied in a variety of function spaces (see \cite{Ch,Ch1,CM,GT,LLQR,Sh,Sh1,Sm,SY}).
By Littlewood’s subordination principle, every composition operator is bounded on the standard weighted Bergman spaces and Bergman–Orlicz spaces.
In the weighted Bergman spaces, MacCluer and Shapiro \cite{MS} characterized compactness of composition operators by the Julia-Carathéodory type condition
\begin{align*}
R _ { \varphi } (z)  = \frac { 1 - | z | ^{2} } { 1 - | \varphi (z) | ^{2} } \rightarrow 0  \ \text{ as }\  | z | \to 1.
\end{align*}
For differences of composition operators on $A_{\alpha}^{2}(\mathbb{D})$, Moorhouse \cite{Mo} obtained a characterization involving
\begin{align*}
M _ { \varphi , \psi } (z)  = [ R _ { \varphi }(z) + R _ { \psi } (z) ] \rho _ { \varphi , \psi } (z) \to 0  \qquad    as  \,\,\,\,\,    | z | \to 1 ,
\end{align*}
where $\rho _ { \varphi , \psi } (z)= \rho(\varphi(z),\psi(z))$ and $\rho$ denotes the pseudohyperbolic distance.
Peláez and Rättyä \cite{PR1} further show that every composition operator is also bounded on weighted Bergman $A^p(\omega)$ for $\omega\in\widehat{\mathcal D}$.
Choe, Koo and Wang \cite{CKW}  characterized  the compact linear combination of composition operators with arbitrary symbols on $A_{\alpha}^{p}(\mathbb{D})$. Recently, Guo and Wang \cite{GW}
obtained a characterization of the compactness of linear combinations of composition operators on $A^p(\omega)$ for $\omega\in\mathcal D$. Motivated by these results, in this paper we further investigate the compactness of linear combinations of composition operators on weighted Bergman-Orlicz spaces $A^{\Psi}_{\omega}(\mathbb{D})$ with $\omega\in\mathcal D$.
However, the results on linear combinations of composition operators mentioned above concern weighted Bergman spaces, whose norms are based on power functions. In weighted Bergman--Orlicz spaces, these power functions are replaced by a more general Orlicz function, so the previous compactness criteria do not directly apply.

To state our main results, we introduce some notations.
Let $N \geq 2$ be an arbitrary integer, and let $\mathscr{P}_N$ denote the group of all permutations of the index set $$\Lambda _ { N } = \{ 1 , 2 , \ldots , N \} .$$
Assume that the coefficients
$$a _ { 1 } , \ldots , a _ { N } \in \mathbb { C } \backslash \{ 0 \}$$
and the symbol functions
$$\varphi _ { 1 } , \varphi _ { 2 } , \ldots, \varphi _ { N } \in S(\mathbb{D}).$$
Let
$$T _ { i }  = C _ { \varphi _ { i } }  ,   \,\,\,     T _ { i , k }  = T _ { i } - T _ { k}$$
for $i,k=1,\ldots,N$. For simplicity, denote the linear combination operators by
$$T = \sum _ { i= 1 } ^{N} a _ { i } T _ { i }.$$
We identify each $\eta \in \mathscr{P}_N$ with the ordered $N$-tuple $(\eta_1, \ldots, \eta_N)$, where $\eta_i = \eta(i)$ and write $$\eta = ( \eta _ { 1 } , \ldots , \eta _ { N } ) .$$
For $\eta \in \mathscr{P}_N$, let
$$s _ { i } ^{\eta} = s _ { i } ^{\eta} ( a _ { 1 } , \cdot\cdot\cdot , a _ { N } )  = \sum _ { k = 1 } ^{i} a _ { \eta _ { k } }$$
for all $i \in \Lambda _ { N }$. The operator $T$  can be rewritten as
\begin{align}\label{eq: operator-T}
T = \sum _ { i = 1 } ^{N - 1} s _ { i } ^{\eta} T _ { \eta _ { i } , \eta _ { i + 1 } } + s _ { N } ^{\eta} T _ { \eta _ { N } }
\end{align}
for each $ \eta\in \mathscr{P}_N$.
Furthermore, let
$$R _ { i }  = R _ { \varphi _ { i } } , \,\,\,   M _ { i , k }  = M _ { \varphi _ { i } , \varphi _ { k } } ,  \,\,\,   \rho _ { i , k }  = \rho _ { \varphi _ { i } , \varphi _ { k } }$$
for $i,k\in\Lambda_N$ and $\varphi _ { i } , \varphi _ { k}\in S(\mathbb{D})$.
By the Schwarz-Pick lemma, each $R_i$ is bounded on $\mathbb{D}$. Consequently, each $M_{i,k}$ is also bounded on $\mathbb{D}$.

Define
\begin{align}\label{eq: define-Q}
Q _ { \eta}  = \sum _ { i = 1 } ^{N - 1} | s _ { i } ^{\eta} |  M _ { \eta _ { i } , \eta _ { i + 1 } } + | s _ { N } ^{\eta} |  R _ { \eta _ { N } }
\end{align}
for $\eta\in\mathscr{P}_{N}$, and set
\begin{align*}
Q  = \prod _ { \eta \in \mathscr { P } _ { N } } Q _ { \eta }.
\end{align*}
Note that all functions  $Q_{\eta}$ are bounded on $\mathbb{D}$.

Let $$f_{z,\gamma}(\xi)=\bigg(\frac{1-|z|^{2}}{1-\bar{z}\xi}\bigg)^{\gamma+1}\Psi^{-1}\bigg(\frac{1}{\omega(S(z))}\bigg)$$ as defined in Lemma \ref{le: text-function}.

The following theorem is our first main result: Characterizing compact linear combination of composition operators via Julia-Carathéodory type conditions. 
\begin{thm}\label{th: Julia-Caratheodory-type}
Let $\Psi$ be an Orlicz function satisfying the $\Delta_{2}$-condition. Suppose that $\omega\in \mathcal{D}$, $\varphi_i\in S(\mathbb{D})$ and $a_i \in \mathbb{C} \backslash \{0\}$ for each $i\in\Lambda_N$. Then the following statements are equivalent:

{\rm (a)} $T$ is compact on $A^{\Psi}_{\omega}(\mathbb{D})$;

{\rm (b)}  $\lim\limits_{|z|\to1} \|Tf_{z,\gamma}\|_{A^{\Psi}_{\omega}(\mathbb{D})}=0$ for all $\gamma>\gamma_{0}$;

{\rm (c)} $\lim\limits_{|z|\to1}Q(z)=0$.
\end{thm}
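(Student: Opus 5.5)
We prove (a)$\Rightarrow$(b)$\Rightarrow$(c)$\Rightarrow$(a), following the scheme of Choe--Koo--Wang \cite{CKW} and Guo--Wang \cite{GW}. The Orlicz norm is handled through the $\Delta_2$-condition: \eqref{eq: remark 1} and the positive upper type \eqref{eq: positive upper type} let us move constants inside $\Psi$ at a bounded cost.

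\textbf{(a)$\Rightarrow$(b).} By Lemma \ref{le: text-function}, for $\gamma>\gamma_0$ the functions $f_{z,\gamma}$ lie in $A^{\Psi}_{\omega}(\mathbb{D})$ with norms bounded uniformly in $z$. They also tend to $0$ uniformly on compact subsets of $\mathbb{D}$ as $|z|\to1$. The factor $\Psi^{-1}(1/\omega(S(z)))$ grows at most polynomially in $(1-|z|)^{-1}$, since $\omega\in\widehat{\mathcal D}$ gives $\omega(S(z))\gtrsim(1-|z|)^{\beta}$ and $\Psi^{-1}(s)\lesssim s$ for large $s$ by convexity. The factor $(1-|z|^2)^{\gamma+1}$ beats this once $\gamma$ is large, which is presumably how $\gamma_0$ is chosen. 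The standard criterion for compact operators on these spaces (bounded sequences converging locally uniformly to $0$ are mapped to norm-null sequences, proved via a normal-families argument) then gives (b).

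\textbf{(b)$\Rightarrow$(c).} This is the combinatorial core. Suppose (c) fails. Then there are $z_n$ with $|z_n|\to1$ and $\delta>0$ such that $Q_\eta(z_n)\ge\delta$ for every $\eta\in\mathscr P_N$. Passing to subsequences, we may assume the following:
\begin{itemize}
\item each $\varphi_i(z_n)$ converges;
\item each $R_i(z_n)$ converges;
\item each $\rho_{i,k}(z_n)$ converges.
\end{itemize}
We then split $\Lambda_N$ into clusters. Two indices $i,k$ lie in the same cluster when $\rho_{i,k}(z_n)\to0$; we also separate off the indices with $R_i(z_n)\to0$. Following \cite{CKW}, we order the indices by decreasing limit of $R_i$ and choose $\eta$ so that the indices in each cluster are consecutive. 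If every cluster with positive $\liminf R_i$ had zero coefficient sum, then $Q_\eta(z_n)\to0$, a contradiction. Hence some cluster $E$ with $\lim R_i>0$ on $E$ has $\sum_{i\in E}a_i\neq0$.

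Now test $T$ on $f_{w_n,\gamma}$ with $w_n=\varphi_{i_0}(z_n)$ for some $i_0\in E$. We show $\|Tf_{w_n,\gamma}\|\not\to0$ by bounding the Luxemburg norm from below by an integral over a small pseudohyperbolic disc $\Delta_n$ around $z_n$. Subharmonicity and the $\mathcal D$-property give $\omega(\Delta_n)\approx\omega(S(z_n))$, comparable to $\omega(S(w_n))$ because $R_{i_0}(z_n)$ is bounded below. On $\Delta_n$ the following hold:
\begin{itemize}
\item for $i\in E$, $f_{w_n,\gamma}\circ\varphi_i$ is close to a common value of size about $\Psi^{-1}(1/\omega(S(w_n)))$;
\item for $i\notin E$, the terms are either small or can be separated by the extra factor $\big(\frac{1-|w|^2}{1-\bar w\xi}\big)^{m}$ trick used in \cite{CKW}, with several values of $\gamma$.
\end{itemize}
This yields $\int\Psi(|Tf|/\epsilon)\,\omega\,dA\gtrsim\Psi(c/\epsilon)/\Psi(1/\epsilon)$-type bounds which stay above $1$ for small $\epsilon$. \emph{This step is the main obstacle.} The difficulty is isolating the cluster sum with Orlicz norms instead of $p$-th powers. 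We will handle it using the linear-combination trick over finitely many $\gamma$ (a Vandermonde argument), which is why (b) is assumed for all $\gamma>\gamma_0$.

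\textbf{(c)$\Rightarrow$(a).} Take a bounded sequence $f_n\to0$ locally uniformly. Since $Q=\prod_\eta Q_\eta$ and each $Q_\eta$ is bounded, near $\partial\mathbb D$ we partition $\mathbb D$ into sets $\Omega_\eta=\{Q_\eta<\epsilon\}$. On $\Omega_\eta$ we use \eqref{eq: operator-T} to write
\begin{align*}
|Tf|\le\sum_i|s_i^\eta||f\circ\varphi_{\eta_i}-f\circ\varphi_{\eta_{i+1}}|+|s_N^\eta||f\circ\varphi_{\eta_N}|,
\end{align*}
and then use convexity of $\Psi$ together with \eqref{eq: remark 1} to split $\Psi$ of the sum into a sum of $\Psi$'s. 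Each term is estimated by pullback-measure (vanishing Carleson measure) arguments, as in \cite{GW}. For the single term we use $|f(\varphi(z))|$ bounded by the mean over a disc, so that a small $R_{\eta_N}$ forces smallness. For differences we use the standard estimate $|f(a)-f(b)|\lesssim\rho(a,b)$ times the average over a disc, combined with $M_{i,k}<\epsilon$. The parts on compact sets vanish by local uniform convergence. This gives $\int\Psi(|Tf_n|/\lambda)\,\omega\,dA\le1$ eventually for every $\lambda>0$, i.e. $\|Tf_n\|\to0$.
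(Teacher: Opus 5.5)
Your (a)$\Rightarrow$(b) and the outline of (c)$\Rightarrow$(a) follow the paper. The genuine gap is in (b)$\Rightarrow$(c), exactly at the step you call the main obstacle.

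\textbf{What is missing.} Your reduction is correct. Put $F=\{i:R_i(z_n)\to0\}$ and $F'=\Lambda_N\setminus F$. If every cluster $E\subset F'$ had $\sum_{i\in E}a_i=0$, then the permutation listing the clusters consecutively with $F$ last gives $Q_\eta(z_n)\to0$. But proving that all these cluster sums vanish (Claim (iv) in the paper) is the whole content of the implication. You only name a strategy, and as stated it does not isolate the sum. Lemma~\ref{le: pointwise estimate}, together with $\omega(S(z_n))\approx\omega(S(w_n))$, turns (b) into
\[
\sum_{i=1}^{N}a_i\lambda_{i,n}^{\gamma+1}\to0,\qquad \lambda_{i,n}=\frac{1-|w_n|^2}{1-\overline{w_n}\varphi_i(z_n)},\qquad w_n=\varphi_{i_0}(z_n).
\]
Take an arbitrary cluster $E\ni i_0$. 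Indices $i$ in other clusters with $|\varphi_i(z_n)|>|w_n|$ can have $\lim|\lambda_{i,n}|>1$. For example, $w_n=1-h_n$ and $\varphi_i(z_n)=1-h_n/2$ give $\lambda_{i,n}\to4/3$ while $\rho\to1/3$. Such terms are neither small nor dominated by the $E$-terms, and for a given $\gamma$ they can cancel $\sum_{E}a_i$.

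\textbf{How the paper closes it.} The paper takes the test point $\xi_n=\varphi_{k_1}(z_n)$ with $|\varphi_{k_1}(z_n)|$ maximal over $F'$. Then $|\lambda_{i,n}|^2\le1-\rho_{k_1,i}^2(z_n)$ for every $i\in F'$. This yields $|\sum_{i\in J_1}a_i|\le Cc^{\gamma+1}$ with $c<1$ and $C$ independent of $\gamma$, and letting $\gamma\to\infty$ settles the first cluster. The remaining clusters follow by induction, using $\varphi_{k_m}(z_n)$ of maximal modulus over $J_m\cup\cdots\cup J_L$. The already treated $T_{J_l}$ have zero coefficient sum. They are therefore combinations of $T_{i,k}$ with $\rho_{i,k}(z_n)\to0$, which Lemma~\ref{le: difference estimate} controls.

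\textbf{Further points.}
\begin{itemize}
\item The ``Orlicz versus $p$-th power'' difficulty is illusory. The pointwise estimate reduces (b) to the scalar statement above, so no lower bound for the modular over a disc is needed.
\item Your disc argument is also unsound as written. For $\omega\in\mathcal D$ one only has $\omega(E_r(z))\approx\omega(S(z))$ for $r>r_0(\omega)$, since doubling weights may vanish on annuli. A small disc would require passing to $\widetilde\omega$.
\item Your Vandermonde idea does work at the scalar level, and it would be a genuinely different route from the paper's. Pass to a subsequence with $\lambda_{i,n}\to\lambda_i$; then $\lambda_i=0$ for $i\in F$. Moreover $\lambda_{i,n}=1-\overline{w_n}v_{i,n}$ with $v_{i,n}=\frac{w_n-\varphi_i(z_n)}{1-\overline{w_n}\varphi_i(z_n)}$, so $\lambda_i=1$ exactly when $\rho_{i_0,i}(z_n)\to0$, i.e.\ when $i\in E$. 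Grouping equal nonzero limits gives $\sum_\lambda c_\lambda\lambda^{\gamma+1}=0$ with $c_\lambda=\sum_{\lambda_i=\lambda}a_i$, for all integers $\gamma>\gamma_0$. Consecutive such integers give an invertible Vandermonde system. Hence $c_1=\sum_{i\in E}a_i=0$ for every cluster at once, with no ordering and no induction. Your proposal, however, carries out neither argument.
\item In (c)$\Rightarrow$(a), the disc-mean bound only reduces the single-term estimate to $\int_{\varphi_{\eta_N}^{-1}(E_r(\xi))\cap U}\omega\,{\rm d}A\lesssim\varepsilon^{\delta}\omega(E_r(\xi))$ on $U=\{R_{\eta_N}\le\varepsilon\}$. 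Proving this is where the paper needs the auxiliary weight $\omega(1-|z|)^{-\delta}\in\mathcal D$ with $\delta<\alpha$ (Lemma~\ref{le: Orlicz-Carleson}).
\end{itemize}
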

Theorem 1.1 gives a characterization of the compactness of linear combinations of composition operators in terms of Julia-Carathéodory type conditions.
Carleson measures play an important role in the study of boundedness and compactness of composition operators. For a single composition operator, the change-of-variables formula provides a connection between the operator and its associated pullback measure(see \eqref{eq:change-of-variable}). Dong, Duan, Guo and Wang \cite{DDGW} investigate the Carleson measure for weighted Bergman-Orlicz spaces induced by doubling weight. We recall the definitions of Carleson measure on $A^{\Psi}_{\omega}(\mathbb{D})$.

For a positive Borel measure $\mu$ on $\mathbb{D}$, let $L^{\Psi}_{\mu}(\mathbb{D})$ be the space of all measurable functions $f$ on $\mathbb{D}$ for which
$$\int _ {\mathbb{D}} \Psi \bigg( \frac { | f (z) | } { C } \bigg)  {\rm d}\mu (z) < + \infty,$$ 
 for some $C>0$, equipped with Luxemburg norm
$$\|f\|_{L^{\Psi}_{\mu}(\mathbb{D})}=\inf\bigg\{C>0: \int _ { \mathbb{D} } \Psi \bigg( \frac { | f (z) | } { C } \bigg)  {\rm d}\mu (z) \leq1\bigg\}.$$
We say that $\mu$ is a Carleson measure for $A^{\Psi}_{\omega}(\mathbb{D})$,
if identity embedding $I_{d} :A^{\Psi}_{\omega}(\mathbb{D})\rightarrow L^{\Psi}_{\mu}(\mathbb{D})$  is bounded, that is, there exists a constant  $C>0$ such that
\begin{align*}
  \|f\|_{L^{\Psi}_{\mu}(\mathbb{D})} & \leq C \|f\|_{A^{\Psi}_{\omega}(\mathbb{D})}, 
\end{align*}
for any $f\in A^{\Psi}_{\omega}(\mathbb{D})$.

Moreover, we say that $\mu$ is a vanishing Carleson measure for $A^{\Psi}_{\omega}(\mathbb{D})$, if   
identity embedding $I_{d} :A^{\Psi}_{\omega}(\mathbb{D})\rightarrow L^{\Psi}_{\mu}(\mathbb{D})$  is compact, that is, for any bounded sequence $\{f_{n}\}$ on $A^{\Psi}_{\omega}(\mathbb{D})$, if $f_{n}\to0$ uniformly on every compact subset of $\mathbb{D}$, then $$\lim_{n\to\infty} \|f_{n}\|_{L^{\Psi}_{\mu}(\mathbb{D})}=0. $$

To study the compactness of linear combinations of composition operators, we introduce some notations. Given $\eta\in \mathscr{P}_N$, we write
\begin{align}\label{eq: define-G}
G _ { \eta }  = \left\{ z \in \mathbb { D } : Q _ { \eta } (z) = \min _ { \tau \in \mathscr { P } _ { N } } Q _ { \tau } (z) \right\}.
\end{align}
 For any Borel set $E \subset \mathbb{D}$, we define the joint pullback measures $\mu_{\Psi,\eta_i,\omega} $ and $\nu_{\Psi,\eta_i,\omega} $ as follows,
$$\mu_{\Psi,\eta_i,\omega}(E)  = \int  _{\varphi _ { \eta _ { i } } ^{- 1} (E)  \cap G _ { \eta } } \Psi(M_{\eta _ { i },\eta _ { i+1 }}) \omega{\rm d} A +\int  _{\varphi _ { \eta _ { i+1 } } ^{- 1} (E)  \cap G _ { \eta } }\Psi( M_{\eta _ { i },\eta _ { i+1 }}) \omega {\rm d} A, $$
$$\nu_{\Psi,\eta_i,\omega} (E)  = \int  _{\varphi _ { \eta _ { i } } ^{- 1} (E)  \cap G _ { \eta } } \Psi(\rho_{\eta _ { i },\eta _ { i+1 }})\omega {\rm d} A +\int  _{\varphi _ { \eta _ { i+1 } } ^{- 1} (E)  \cap G _ { \eta } } \Psi(\rho_{\eta _ { i },\eta _ { i+1 }})  \omega {\rm d} A $$
for $1 \leq i < N$ and
$$\mu _ { \Psi,\eta _ { N },\omega } (E)  = \nu _ { \Psi,\eta _ { N } ,\omega} (E)  = \int _ {   \varphi _ { \eta _ { N } } ^{- 1} (E) \cap G _ { \eta }} \Psi( R _ { \eta _ { N } } ) \omega {\rm d} A. $$
Set
\begin{align*}
\mu _{\Psi,\omega} = \sum _ { \eta \in \mathscr { P } _ { N } } \mu _{\Psi,\eta,\omega},  \quad \text {where} \quad \mu _{\Psi,\eta,\omega}  = \sum _ { i = 1 } ^{N} | s _ { i} ^{\eta} |  \mu _ { \Psi,\eta _ { i },\omega },
\end{align*}
and
$$\nu _{\Psi,\omega}  = \sum _ { \eta \in \mathscr { P } _ { N } } \nu  _{\Psi,\eta,\omega},   \quad \text {where}  \quad   \nu  _{\Psi,\eta,\omega}  = \sum _ { i = 1 } ^{N} | s _ { i } ^{\eta} |  \nu _ { \Psi,\eta _ { i } ,\omega} .$$

The following theorem gives a characterization of the compactness of linear combinations of composition operators in terms of vanishing Carleson measure.
\begin{thm}
Let $\Psi$ be an Orlicz function satisfying the $\Delta_{2}$-condition and $\omega\in \mathcal{D}$. Let $\varphi_i\in S(\mathbb{D})$ and $a_i \in \mathbb{C} \backslash \{0\}$ for $i\in\Lambda _ { N }$. Then the following statements are equivalent:

{\rm (a)} $T$ is compact on $A^{\Psi}_{\omega}(\mathbb{D})$;

{\rm (b)} $\mu_{\Psi,\omega}$ is a vanishing Carleson measure for $A^{\Psi}_{\omega}(\mathbb{D})$;

{\rm (c)} $\nu_{\Psi,\omega}$ is a vanishing Carleson measure for $A^{\Psi}_{\omega}(\mathbb{D})$.
\end{thm}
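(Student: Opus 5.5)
We will prove the equivalences by combining Theorem \ref{th: Julia-Caratheodory-type} with a Carleson-measure characterization of vanishing Carleson measures for $A^{\Psi}_{\omega}(\mathbb{D})$. We assume the known result from \cite{DDGW}, adapted to the vanishing case: for $\omega\in\mathcal D$ and $\Psi\in\Delta_2$, a positive Borel measure $\mu$ is vanishing Carleson for $A^{\Psi}_{\omega}(\mathbb{D})$ if and only if $\mu(S(a))/\omega(S(a))\to0$ as $|a|\to1$, where $S(a)$ are Carleson boxes. The $\Delta_2$-condition, through \eqref{eq: remark 1}, lets us pass freely between $\Psi(c\,t)$ and $\Psi(t)$. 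We will then show (a)$\Rightarrow$(b)$\Rightarrow$(c)$\Rightarrow$(a). The middle implication is immediate: $\rho_{i,k}\le M_{i,k}$ up to a constant, since $R_i+R_k\ge$ a positive constant by the Schwarz--Pick bound $1-|z|^2\le C(1-|\varphi(z)|^2)$, which gives $R$ bounded below away from $0$ only after normalization. More precisely, $R_\varphi(z)\ge \frac{1-|\varphi(0)|}{1+|\varphi(0)|}\cdot\frac{1}{2}$-type lower bounds hold. Hence $\Psi(\rho)\le C\Psi(M)$, so $\nu_{\Psi,\omega}\le C\mu_{\Psi,\omega}$, and vanishing of $\mu$ passes to $\nu$.

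For (a)$\Rightarrow$(b), compactness gives (c) of Theorem \ref{th: Julia-Caratheodory-type}, i.e.\ $Q(z)\to0$. On $G_\eta$ we have $Q_\eta=\min_\tau Q_\tau\le Q^{1/N!}$, so $Q_\eta\to0$ uniformly on $G_\eta$ near $\partial\mathbb D$. Consequently each weight $|s_i^\eta|\Psi(M_{\eta_i,\eta_{i+1}})$ and $|s_N^\eta|\Psi(R_{\eta_N})$ tends to $0$ on $G_\eta\cap\{|z|>r\}$; when $s_i^\eta=0$ the term is absent. Given $\varepsilon>0$, we split $\mu_{\Psi,\omega}=\mu^{(r)}+\mu_{(r)}$ according to $|z|\le r$ or $|z|>r$ in the pullback. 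The part from $|z|\le r$ is compactly supported in $\mathbb D$ because $\varphi_j(\{|z|\le r\})$ is compact, so it is trivially vanishing Carleson. The other part is dominated by $\varepsilon$ times the sum of the pullbacks $\omega\circ\varphi_j^{-1}$, which are Carleson measures for $A^{\Psi}_{\omega}$ since each $C_{\varphi_j}$ is bounded. Hence $\limsup \mu(S(a))/\omega(S(a))\le C\varepsilon$, and $\mu_{\Psi,\omega}$ is vanishing Carleson.

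For (c)$\Rightarrow$(a), which is the main obstacle, we take $f_n$ bounded with $f_n\to0$ locally uniformly and must show $\|Tf_n\|\to0$. By the Luxemburg norm and $\Delta_2$, it suffices to show $\int\Psi(|Tf_n|)\omega\,dA\to0$ for $\|f_n\|\le c$ small. We decompose $\mathbb D=\bigcup_\eta G_\eta$ and, on $G_\eta$, use the representation \eqref{eq: operator-T}:
\[
|Tf|\le\sum_{i<N}|s_i^\eta||f\circ\varphi_{\eta_i}-f\circ\varphi_{\eta_{i+1}}|+|s_N^\eta||f\circ\varphi_{\eta_N}|.
\]
We then apply convexity of $\Psi$ to split the sum. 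The key pointwise estimate, an Orlicz analogue of the Choe--Koo--Wang lemma, will be the following. For $\rho(z,w)<\delta$,
\[
\Psi(|f(z)-f(w)|)\lesssim \Psi(\rho(z,w))\,\frac{1}{\omega(\Delta(z,r))}\int_{\Delta(z,r)}\Psi(|f|)\omega\,dA,
\]
which we will derive from subharmonicity of $\Psi(|f|)$ (using upper type $p$ and a lower-type/convexity argument) together with the $\omega$-doubling local comparability. For $\rho\ge\delta$ we simply bound the difference by the two terms, with $\Psi(\rho)\gtrsim1$. Integrating over $G_\eta$ and applying Fubini turns $\int_{G_\eta}\Psi(|f\circ\varphi_{\eta_i}-f\circ\varphi_{\eta_{i+1}}|)\omega\,dA$ into $\int\widehat{\Psi(|f|)}\,d\nu_{\Psi,\eta_i,\omega}$, where the hat denotes the local averages. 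The vanishing Carleson property of $\nu_{\Psi,\omega}$, which transfers to the averaged version via the box criterion, then makes this tend to $0$ for the sequence $f_n$. The delicate points are making the pointwise difference estimate work with $\Psi(\rho)$ rather than $\rho^p$, which requires the Orlicz inequality $\Psi(st)\lesssim \Psi(s)\Psi(t)$-type control for $s\le1$ that convexity alone gives as $\Psi(st)\le s\Psi(t)$. We will therefore likely need to use $\Psi(\rho)\ge\rho\,\Psi(1)$-type comparisons and possibly $\Delta_2$ twice. If this direct route fails, we will instead show that (c) implies $Q\to0$, by testing $\nu$ on boxes near the points $\varphi_j(z)$ using Lemma \ref{le: text-function}, and then conclude with Theorem \ref{th: Julia-Caratheodory-type}.
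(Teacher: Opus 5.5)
\textbf{The gap is in (b)$\Rightarrow$(c).} You call this step immediate, but it is false as argued, and your whole cycle depends on it. The Schwarz--Pick lemma gives $1-|z|^2\le C(1-|\varphi(z)|^2)$, i.e.\ an \emph{upper} bound $R_\varphi\le C$. There is no positive lower bound: for $\varphi(z)=z/2$ one has $R_\varphi(z)=(1-|z|^2)/(1-|z|^2/4)\to0$, and $R_\varphi\to0$ is exactly the MacCluer--Shapiro compactness condition. So the only pointwise comparison available is $M_{i,k}\le C\rho_{i,k}$. That gives $\mu_{\Psi,\omega}\lesssim\nu_{\Psi,\omega}$, which is the trivial implication (c)$\Rightarrow$(b), not (b)$\Rightarrow$(c). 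For $\varphi_1(z)=z/2$ and $\varphi_2(z)=z/3$ one has $M_{1,2}\to0$ while $\rho_{1,2}\to1/5$, so $\nu\le C\mu$ genuinely fails. With this step gone, your argument gives no route from (b) to either (a) or (c).

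What is missing is the content of Lemma~\ref{le: vanishing-Carleson-measure}. Put $K_\varepsilon=\{R_\varphi+R_\psi\le\varepsilon\}$.
\begin{itemize}
\item Off $K_\varepsilon$ you have $\rho_{\varphi,\psi}\le M_{\varphi,\psi}/\varepsilon$, so $\Psi(\rho_{\varphi,\psi})\le C_\varepsilon\Psi(M_{\varphi,\psi})$ by \eqref{eq: remark 1}.
\item On $K_\varepsilon$ you bound $\Psi(\rho_{\varphi,\psi})\le\Psi(1)$ and apply Lemma~\ref{le: Orlicz-Carleson} with $H=\chi_{K_\varepsilon}$. This gives a contribution $\lesssim\varepsilon^{\delta}$, uniformly in $n$.
\end{itemize}
Hence $\limsup_n\int\Psi(|f_n|)\,d\nu_{\Psi,\omega}\lesssim\varepsilon^\delta$. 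Some argument of this kind is unavoidable. Neither (a) nor (b) forces $\rho$ to be small where the $R_i$ are small, so you must show that the pullback measure of that region is Carleson-small.

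\textbf{A second, repairable problem is in (c)$\Rightarrow$(a).} It rests on the pointwise estimate $\Psi(|f(z)-f(w)|)\lesssim\Psi(\rho(z,w))\cdot(\text{local average of }\Psi(|f|))$. Testing with $f(\xi)=L\xi$ near $0$, this amounts to $\Psi(st)\lesssim\Psi(s)\Psi(t)$ for $s\le1$. That fails for the $\Delta_2$ function $\Psi(t)=t\log(1+t)$ with $s=1/t$, $t\to\infty$. Convexity only yields the factor $\rho$, and $\rho\lesssim\Psi(\rho)$ fails, e.g.\ for $\Psi(t)=t^2$. You do not need the $\Psi(\rho)$ factor:
\begin{itemize}
\item On $\{\rho<\delta\}$, the bound for $I_1(f)$ in the proof of Lemma~\ref{le: symbols-difference} gives $\lesssim\delta\int\Psi(|f|)\omega\,dA$, with no hypothesis on any measure.
\item On $\{\rho\ge\delta\}$, use $1\le\Psi(\rho)/\Psi(\delta)$ together with the vanishing of $\nu_{\Psi,\omega}$.
\item The term $s_N^\eta T_{\eta_N}$, which you do not treat, needs a split at $R_{\eta_N}=\varepsilon$ combined with Lemma~\ref{le: Orlicz-Carleson}.
\end{itemize}
The paper's route is more economical. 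It proves (b)$\Rightarrow$(a) by splitting at $M_{\eta_i,\eta_{i+1}}=\varepsilon$, using $\chi_{\{M\ge\varepsilon\}}\le\Psi(M)/\Psi(\varepsilon)$ on one side and Lemma~\ref{le: symbols-difference} on the other. Then (c)$\Rightarrow$(a) follows from the trivial (c)$\Rightarrow$(b).

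Your (a)$\Rightarrow$(b) argument is correct and is essentially the paper's.
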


This paper is organized as follows. In Section 2, we introduce some basic facts, properties of doubling weight and lemmas on weighted Bergman-Orlicz space that will be used throughout the paper. 
Sections 3 and 4 are devoted to the proofs of Theorems 1.1 and 1.2, respectively.

Throughout the paper, the notation $A\lesssim B$ ($B\gtrsim A$) means that there exists an absolute positive constant $C$  such that $A\leq CB$. We write $A\approx B$ means that both $A\lesssim B$ and $B\lesssim A$ hold. We use the letter $C$ to denote various positive constants which may vary at each occurrence.

\section{Preliminaries}
In this section, we introduce some basic facts and lemmas that will be used in the sequel. 

Let $\mathbb{T}=\partial\mathbb{D}$ . For an arc $I\subset\mathbb{T}$, define the associated Carleson square by
$$S(I)=\left\{z\in\mathbb{D}:\frac{z}{|z|}\in I,\;1-|I|\le |z|<1\right\},$$
where $|I|$ denotes normalized arc length.
For $a\in\mathbb{D}\setminus\{0\}$, let
$$I_a=\left\{\xi\in\mathbb{T}:\left|\arg(a\overline{\xi})\right|\le\pi(1-|a|)\right\},$$
 and $S(0)=\mathbb{D}$. For simplicity write $S(a)=S(I_a)$.

Pseudohyperbolic disk will also play an important role in our arguments. Let $\rho(a,z)$ be the pseudohyperbolic distance between $a$ and $z$, and
$$\rho(a,z)=\left|\frac{a-z}{1-a\bar{z}}\right|, \ a, z\in\mathbb{D}.$$
Simple computation shows that
\begin{align}\label{eq: distance}
1-\rho^{2}(a,z)=\frac{(1-|a|^{2})(1-|z|^{2})}{|1-a\bar{z}|^{2}}.
\end{align}
For $0<r<1$ and $a\in\mathbb{D}$, let $E_{r}(a)$ denote the pseudohyperbolic disk centered at $a$ with radius $r$ and $$E_{r}(a) =\{z\in\mathbb{D}: \rho(a,z)<r\}.$$
It is well known that $E_{r}(a)$ is also a Euclidean disk and its center and radius are given by
\begin{align}\label{eq:Euclidean disk}
\frac{1-r^{2}}{1-|a|^{2}r^{2}}a \qquad & \text{and} \qquad \frac{1-|a|^{2}}{1-|a|^{2}r^{2}}r.
\end{align}
 respectively. For all $a,z\in\mathbb{D}$, by \eqref{eq: distance}, we have
\begin{align}\label{eq: ratio-estimate}
\frac{1-\rho(a,z)}{1+\rho(a,z)}\leq\frac{1-|a|}{1-|z|}\leq\frac{1+\rho(a,z)}{1-\rho(a,z)}.
\end{align}
For any fixed $0<r<1$, 
\begin{align}\label{eq: area-estimate}
  A(E_{r}(a)) & \approx(1-|a|^{2})^{2},      \quad  a\in \mathbb{D},
\end{align}
 and if $z\in E_{r}(a)$, we have
\begin{align}\label{eq: kernel-estimate}
 |1-\bar{a}z| \approx 1-|a|^{2}\approx  1-|z|^{2}.
\end{align}
The implicit constants in \eqref{eq: area-estimate}  and \eqref{eq: kernel-estimate} depend only on $r$. For further details on the pseudohyperbolic distance and pseudohyperbolic disk, see \cite[Sections 4.1 and 4.2]{Zh}.

The following lemma provides a useful criterion for the compactness of $T$ on $A_{\omega}^{\Psi}(\mathbb{D})$ in terms of sequential convergence. Its proof follows from the argument for a single composition operator in \cite[Proposition 3.11]{CM} with the same reasoning applied to a linear combination.
\begin{lem}\label{le:compactness}
Let $T$ be a linear combination of composition operators as described in Section 1. Then $T$ is compact on $A_{\omega}^{\Psi}(\mathbb{D})$ if and only if for every bounded sequence $\{f_{n}\}$ in $A_{\omega}^{\Psi}(\mathbb{D})$ that converges to $0$ uniformly on compact subsets of $\mathbb{D}$, we have $\|Tf_{n}\|_{A_{\omega}^{\Psi}(\mathbb{D})}\rightarrow0$ as $n\rightarrow \infty$.
\end{lem}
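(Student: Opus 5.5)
The plan is to adapt the standard argument of \cite[Proposition 3.11]{CM}. We use two facts about $A^{\Psi}_{\omega}(\mathbb{D})$: point evaluations are locally uniformly bounded, and bounded sets form a normal family. We also need $T$ to be bounded on $A^{\Psi}_{\omega}(\mathbb{D})$. For this, observe that each $C_{\varphi_i}$ is bounded here, by Littlewood subordination in the Bergman--Orlicz setting with $\omega\in\widehat{\mathcal D}$, as recalled in the introduction. Then $T$ is bounded as a finite sum.

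\emph{Preliminary estimate.} First I will establish a pointwise bound. For $\|f\|_{A^{\Psi}_{\omega}}\le 1$ and $|z|\le r<1$, I want $|f(z)|\le C_r\|f\|_{A^{\Psi}_{\omega}}$. To get it, take the disk $E_{1/2}(z)$ and apply subharmonicity of $|f|$. Since $\Psi$ is convex and increasing, Jensen's inequality gives
\[
\Psi\Big(\frac{|f(z)|}{\|f\|}\Big)\le \frac{1}{A(E_{1/2}(z))}\int_{E_{1/2}(z)}\Psi\Big(\frac{|f|}{\|f\|}\Big)\,{\rm d}A .
\]
Because $\omega>0$ is integrable and we restrict to a compact set, I will assume $\omega$ is bounded below there, or else average against $\omega\,{\rm d}A$. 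This bounds the right side by $C_r$, and applying $\Psi^{-1}$ gives the claim. Two consequences follow. Norm convergence implies locally uniform convergence, and bounded sets are normal families by Montel's theorem. In addition, Fatou's lemma shows that a locally uniform limit of a norm-bounded sequence stays in the space with controlled norm.

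\emph{Necessity.} Suppose $T$ is compact and $\{f_n\}$ is bounded with $f_n\to0$ locally uniformly. If $\|Tf_n\|\not\to0$, pass to a subsequence with $\|Tf_{n}\|\ge\varepsilon$. By compactness, a further subsequence gives $Tf_{n}\to g$ in norm. Norm convergence implies pointwise convergence. On the other hand,
\[
Tf_n(z)=\sum_i a_i f_n(\varphi_i(z))\to 0
\]
for each $z$, since $f_n\to0$ pointwise. Hence $g=0$, which contradicts $\|Tf_{n}\|\ge\varepsilon$.

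\emph{Sufficiency.} Let $\{f_n\}$ be bounded. By normality, some subsequence converges locally uniformly to a holomorphic $f$, and Fatou's lemma places $f$ in $A^{\Psi}_{\omega}(\mathbb{D})$. Then $f_{n}-f$ is bounded and tends to $0$ locally uniformly. The hypothesis gives $\|T(f_{n}-f)\|\to0$, so $Tf_{n}\to Tf$ in norm, and $T$ is compact.

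\emph{Main obstacle.} The only delicate step is the Fatou argument that $f\in A^{\Psi}_{\omega}$ with $\|f\|\le\liminf\|f_n\|$. I would prove it directly: for $C>\liminf\|f_n\|$, Fatou's lemma applied to $\Psi(|f_n|/C)\,\omega$ gives an integral at most $1$, which bounds the Luxemburg norm by $C$.
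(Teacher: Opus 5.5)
Your argument is the standard proof of \cite[Proposition 3.11]{CM} carried over to $T$, which is exactly what the paper invokes; the paper gives no further detail. Your necessity and sufficiency steps and your Fatou argument for the limit function are correct.

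The one step that does not work as written is the preliminary pointwise bound. Positivity and integrability of $\omega$ do not give a lower bound on compacta. Moreover, a radial weight in $\mathcal{D}$ is only required to satisfy $\widehat{\omega}(r)>0$, so it may vanish on a whole disc; for instance $\omega=\chi_{\{1/2<|z|<1\}}$ belongs to $\mathcal{D}$. Also, $E_{1/2}(z)$ is a Euclidean disc that is not centered at $z$, so the sub-mean-value inequality you display is not directly available. Your alternative, ``average against $\omega\,{\rm d}A$'', is left unjustified.

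The fix is immediate. Lemma~\ref{le: pointwise estimate} together with \eqref{eq:carleson-square-weight} gives, for $|z|\le r$, $|f(z)|\le C\,\Psi^{-1}\bigl(C'/(\widehat{\omega}(r)(1-r))\bigr)\|f\|_{A^{\Psi}_{\omega}(\mathbb{D})}$, because $\widehat{\omega}$ is nonincreasing. This locally uniform bound is all you use afterwards: for Montel's theorem, and for the fact that norm convergence implies locally uniform convergence.
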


The following lemma gives uniformly bounded test functions for $A^{\Psi}_{\omega}(\mathbb{D})$.
\begin{lem}(\cite[Lemma 11]{DDGW})\label{le: text-function}
Let $\Psi$ be an Orlicz function satisfying the $\Delta_{2}$-condition and $\omega\in \widehat{\mathcal{D}}$.
Then there exists a constat $\gamma_{0}=\gamma_{0}(\omega)>0$ such that for any $\gamma\geq\gamma_{0}$ and each $z\in \mathbb{D}$, the function 
 $$f_{z,\gamma}(\xi)=\bigg(\frac{1-|z|^{2}}{1-\bar{z}\xi}\bigg)^{\gamma+1}\Psi^{-1}\bigg(\frac{1}{\omega(S(z))}\bigg),       \,\,\,\,\,\,   \xi\in\mathbb{D}$$ 
belongs to $A^{\Psi}_{\omega}(\mathbb{D}) $ and 
$$\sup_{z\in\mathbb{D}}\|f_{z}\|_{ A^{\Psi}_{\omega}(\mathbb{D}) } \lesssim 1.$$ 
\end{lem}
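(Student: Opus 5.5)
This is Lemma \ref{le: text-function}, quoted from \cite[Lemma 11]{DDGW}. The plan is to reduce the Luxemburg-norm bound to a single integral estimate for a power of the kernel. The estimate needed is
\begin{align*}
\int_{\mathbb{D}}\Psi\big(|f_{z,\gamma}(\xi)|\big)\,\omega(\xi)\,{\rm d}A(\xi)\le C
\end{align*}
with $C$ independent of $z$. By \eqref{eq: remark 1}, this forces $\|f_{z,\gamma}\|_{A^{\Psi}_{\omega}(\mathbb{D})}\le C'$ uniformly in $z$: pick $B$ large enough that $\int\Psi(|f|/B)\omega\,{\rm d}A\le 1$ via $\Psi(t/B)\le \Psi(t)/C$. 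Concretely, convexity together with $\Psi(0)=0$ gives $\Psi(t/B)\le\Psi(t)/B$ for $B\ge1$, so $B=\max(1,C)$ suffices. Membership in $A^{\Psi}_{\omega}(\mathbb{D})$ follows from the same bound, since $f_{z,\gamma}$ is bounded for fixed $z$.

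\textbf{Pointwise bound.} Write $\lambda=\Psi^{-1}(1/\omega(S(z)))$ and $k(\xi)=\big|\frac{1-|z|^2}{1-\bar z\xi}\big|^{\gamma+1}$. Since $|1-\bar z\xi|\ge 1-|z|$, we have $k\le 2^{\gamma+1}$, so $k/2^{\gamma+1}\le1$. For $s\le1$, convexity gives $\Psi(s\lambda)\le s\Psi(\lambda)$. Hence
$$\Psi(|f_{z,\gamma}|)=\Psi\big(2^{\gamma+1}\cdot (k/2^{\gamma+1})\lambda\big)\lesssim \Psi\big((k/2^{\gamma+1})\lambda\big)\le 2^{-\gamma-1}k\,\frac{1}{\omega(S(z))}.$$
The first inequality uses $\Delta_2$ through \eqref{eq: remark 1} with $B=2^{\gamma+1}$. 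This is the key use of convexity: it linearizes $\Psi$. It then suffices to show
$$\int_{\mathbb{D}}\frac{(1-|z|)^{\gamma+1}}{|1-\bar z\xi|^{\gamma+1}}\,\omega(\xi)\,{\rm d}A(\xi)\lesssim \omega(S(z)).$$

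\textbf{Kernel estimate for $\omega\in\widehat{\mathcal D}$.} This is the main obstacle, and it is where $\gamma_0$ comes from. I would decompose $\mathbb{D}$ into dyadic regions $S_j=S(2^jI_z)\setminus S(2^{j-1}I_z)$ for $j\ge1$, with $S_0=S(z)$, taking $j$ up to about $\log_2\frac1{1-|z|}$. On $S_j$ one has $|1-\bar z\xi|\gtrsim 2^j(1-|z|)$, so the integral is bounded by $\sum_j 2^{-j(\gamma+1)}\omega(S(2^jI_z))$.

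For radial $\omega$, $\omega(S(I))\approx |I|\,\widehat\omega(1-|I|)$. The standard characterization of $\widehat{\mathcal D}$ gives $\beta=\beta(\omega)>0$ with $\widehat\omega(r)\le C\big(\frac{1-r}{1-t}\big)^{\beta}\widehat\omega(t)$ for $r\le t$; this follows by iterating the defining doubling inequality. Hence
$$\omega(S(2^jI_z))\lesssim 2^{j(1+\beta)}\omega(S(z)).$$
The series $\sum_j 2^{j(\beta-\gamma)}$ then converges uniformly in $z$ once $\gamma>\beta$, so $\gamma_0=\beta+1$ works. Near $|z|$ small the estimate is trivial, since the kernel is bounded and $\omega(S(z))\approx\omega(\mathbb{D})$.

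Combining the three steps gives $\int\Psi(|f_{z,\gamma}|)\,\omega\,{\rm d}A\lesssim 1$ uniformly in $z$, which yields the uniform norm bound.
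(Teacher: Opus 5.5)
Your proof is correct. The paper gives no proof of its own here: it simply quotes \cite[Lemma 11]{DDGW}, so there is no internal argument to compare against, and yours serves as a self-contained replacement.

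The linearization step is sound. Because $\frac{1-|z|^{2}}{|1-\bar z\xi|}\le 1+|z|<2$, the kernel is bounded by $2^{\gamma+1}$. So $\Delta_{2}$ with $B=2^{\gamma+1}$, followed by $\Psi(s\lambda)\le s\Psi(\lambda)$ for $s\le 1$, reduces everything to the kernel estimate
\begin{align*}
\int_{\mathbb{D}}\frac{(1-|z|)^{\gamma+1}}{|1-\bar z\xi|^{\gamma+1}}\,\omega(\xi)\,{\rm d}A(\xi)\lesssim\omega(S(z)).
\end{align*}
Your dyadic argument proves this estimate correctly. You use the enlarged boxes $S(2^{j}I_z)$, the lower bound $|1-\bar z\xi|\gtrsim 2^{j}(1-|z|)$ off $S(2^{j-1}I_z)$, and the growth bound $\widehat\omega(r)\le C\big(\frac{1-r}{1-t}\big)^{\beta}\widehat\omega(t)$ from Lemma \ref{le: doubling weight property}. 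This gives convergence for $\gamma>\beta$ and hence an explicit $\gamma_0$. The argument uses only $\omega\in\widehat{\mathcal{D}}$, which matches the lemma's hypothesis. In the literature this estimate is usually quoted as the Pel\'aez--R\"atty\"a asymptotic $\int_{\mathbb{D}}\frac{\omega(\xi)}{|1-\bar z\xi|^{\gamma+1}}{\rm d}A(\xi)\approx\frac{\widehat\omega(z)}{(1-|z|)^{\gamma}}$ for large $\gamma$; you have reproved the upper half by hand.

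One small point: $\omega(S(I))\approx|I|\,\widehat\omega(1-|I|)$ is not true for arbitrary radial weights when $|I|$ is large, and it needs $|I|\le 1$ to make sense. However, you only use the upper bound for the enlarged boxes, capping at $S(2^{j}I_z)=\mathbb{D}$. The lower bound $\omega(S(z))\gtrsim\widehat\omega(z)(1-|z|)$ that you do use is exactly \eqref{eq:carleson-square-weight}, which holds under $\widehat{\mathcal{D}}$. So nothing breaks.
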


The following lemmas establish some useful properties of these classes of weights.

\begin{lem}(\cite[Lemma 1]{YZ})\label{le: doubling weight property}
Let $\omega$ be a radial weight. Then the following statements are equivalent:

{\rm (i)}  $\omega \in \widehat{\mathcal{D}}$;

{\rm (ii)}  There exist constants  $C=C(\omega)>0$ and $\beta=\beta(\omega)>0$  such that
$$\widehat{\omega}(r)\leq C\left(\frac{1-r}{1-t}\right)^{\beta}\widehat{\omega}(t),\qquad 0\leq r\leq t<1.$$

Moreover, either of the above conditions yields
\begin{align}\label{eq:carleson-square-weight}
  \omega(S(z)) & \thickapprox\widehat{\omega}(z)(1-|z|),\qquad  z\in \mathbb{D},
\end{align}
where the constants depend only on $\omega$.
\end{lem}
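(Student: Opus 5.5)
The statement to prove is Lemma \ref{le: doubling weight property}: for a radial weight, (i) $\omega\in\widehat{\mathcal D}$ is equivalent to (ii), and either one gives \eqref{eq:carleson-square-weight}.

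\textbf{(ii)$\Rightarrow$(i).} Take $t=\frac{1+r}{2}$ in (ii). Then $\frac{1-r}{1-t}=2$, so
$$\widehat\omega(r)\le C2^\beta\,\widehat\omega\Big(\frac{1+r}{2}\Big),$$
which is exactly the defining inequality of $\widehat{\mathcal D}$.

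\textbf{(i)$\Rightarrow$(ii).} We iterate the doubling inequality. Fix $0\le r\le t<1$ and set $r_0=r$, $r_{k+1}=\frac{1+r_k}{2}$, so that $1-r_k=2^{-k}(1-r)$. Choose $n\ge 0$ with $r_n\le t<r_{n+1}$; then $2^{n}\le \frac{1-r}{1-t}<2^{n+1}$.

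Applying the $\widehat{\mathcal D}$ inequality $n+1$ times gives $\widehat\omega(r)\le C^{n+1}\widehat\omega(r_{n+1})$. Since $\widehat\omega$ is decreasing and $r_{n+1}>t$, we have $\widehat\omega(r_{n+1})\le\widehat\omega(t)$. Put $\beta=\log_2 C$, so $C^{n}=2^{n\beta}\le\big(\frac{1-r}{1-t}\big)^\beta$. Hence
$$\widehat\omega(r)\le C\Big(\frac{1-r}{1-t}\Big)^\beta\widehat\omega(t).$$

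\textbf{The Carleson square estimate.} Let $a\in\mathbb D\setminus\{0\}$. By definition $S(a)$ is the polar rectangle $\{se^{i\theta}:1-|I_a|\le s<1,\ e^{i\theta}\in I_a\}$, with $|I_a|=1-|a|$ in normalized length. Because $\omega$ is radial and $dA=\frac1\pi\,dx\,dy$, Fubini in polar coordinates gives
$$\omega(S(a))\approx (1-|a|)\int_{|a|}^1\omega(s)\,s\,ds .$$
It remains to compare $\int_{|a|}^1\omega(s)s\,ds$ with $\widehat\omega(|a|)$.
\begin{itemize}
\item The upper bound $\int_{|a|}^1\omega(s)s\,ds\le\widehat\omega(|a|)$ is immediate, since $s\le 1$.
\item For the lower bound when $|a|\ge\frac12$, use $s\ge\frac12$ on the range of integration, which gives $\int_{|a|}^1\omega(s)s\,ds\ge\frac12\widehat\omega(|a|)$.
\item For $|a|<\frac12$ (and for $a=0$, where $S(0)=\mathbb D$), the lower bound follows from $\int_{1/2}^1\omega s\,ds\ge\frac12\widehat\omega(\frac12)$ together with $\widehat\omega(|a|)\le\widehat\omega(0)\le C\,\widehat\omega(\frac12)$, which is the doubling inequality at $r=0$.
\end{itemize}

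The only delicate point is the small-$|a|$ case of the lower bound, where the factor $s$ degenerates near $0$; it is handled by the single application of doubling at $r=0$ above. Every constant depends only on $C(\omega)$.
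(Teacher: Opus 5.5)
Your proof is correct and complete. The paper does not prove this lemma itself but imports it by citation, and your argument is the standard proof of this fact: you iterate the $\widehat{\mathcal{D}}$ inequality along $r_{k+1}=\frac{1+r_k}{2}$ with $\beta=\log_2 C$, specialize $t=\frac{1+r}{2}$ for the converse, and compute $\omega(S(a))=2(1-|a|)\int_{|a|}^{1}\omega(s)s\,ds$ in polar coordinates, using one doubling step at $r=0$ for small $|a|$.
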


\begin{lem}(\cite[Lemma B]{Pd})\label{le:reverse doubling weight property}
Let $\omega$ be a radial weight. Then $\omega\in\check{\mathcal{D}}$ if and only if there exist $C=C(\omega)>0$ and $\alpha=\alpha(\omega)>0$ such that
$$\widehat{\omega}(t)\leq C\left(\frac{1-t}{1-r}\right)^{\alpha}\widehat{\omega}(r),\qquad 0\leq r\leq t<1.$$
\end{lem}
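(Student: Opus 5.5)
The statement immediately above is Lemma~\ref{le:reverse doubling weight property}: $\omega\in\check{\mathcal{D}}$ if and only if $\widehat{\omega}(t)\le C\big(\frac{1-t}{1-r}\big)^{\alpha}\widehat{\omega}(r)$ for $0\le r\le t<1$. The plan is to prove both directions directly by iterating the defining inequality of $\check{\mathcal{D}}$ along a geometric sequence of radii. Throughout we use that $\widehat{\omega}$ is nonincreasing and positive.

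\emph{Sufficiency.} Assume the power estimate holds with constants $C,\alpha$. Choose $K>1$ so large that $C K^{-\alpha}<1$. Put $t=1-\frac{1-r}{K}\ge r$. Then
$$\widehat{\omega}\Big(1-\frac{1-r}{K}\Big)\le C K^{-\alpha}\widehat{\omega}(r),$$
that is, $\widehat{\omega}(r)\ge C'\,\widehat{\omega}\big(1-\frac{1-r}{K}\big)$ with $C'=K^{\alpha}/C>1$. Hence $\omega\in\check{\mathcal{D}}$.

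\emph{Necessity.} Assume $\widehat{\omega}(s)\ge C\,\widehat{\omega}\big(1-\frac{1-s}{K}\big)$ for all $s$, with $K,C>1$.
\begin{itemize}
\item Fix $0\le r\le t<1$ and define $r_0=r$ and $r_{n+1}=1-\frac{1-r_n}{K}$, so that $1-r_n=K^{-n}(1-r)$.
\item Iterating the hypothesis gives $\widehat{\omega}(r_n)\le C^{-n}\widehat{\omega}(r)$.
\item Choose the integer $n\ge0$ with $r_n\le t<r_{n+1}$. Such an $n$ exists because $r_n\uparrow1$. Then $K^{-(n+1)}(1-r)<1-t$.
\item Monotonicity gives $\widehat{\omega}(t)\le\widehat{\omega}(r_n)\le C^{-n}\widehat{\omega}(r)$.
\end{itemize}

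It remains to convert $C^{-n}$ into a power of $\frac{1-t}{1-r}$. Set $\alpha=\log C/\log K>0$, so that $C^{-n}=K^{-n\alpha}$. From $K^{-(n+1)}<\frac{1-t}{1-r}$ we get $K^{-n}<K\,\frac{1-t}{1-r}$. Therefore
$$C^{-n}=(K^{-n})^{\alpha}\le K^{\alpha}\Big(\frac{1-t}{1-r}\Big)^{\alpha}.$$
This yields the claim with constant $K^{\alpha}=C$.

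The only delicate point is this bookkeeping: picking $n$ correctly and passing from discrete steps to the continuous exponent. Once $t$ is bracketed between consecutive $r_n$, it is routine. No earlier results are needed beyond monotonicity of $\widehat{\omega}$.
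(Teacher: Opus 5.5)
Your proof is correct in both directions. For sufficiency, taking $t=1-\frac{1-r}{K}$ with $CK^{-\alpha}<1$ gives membership in $\check{\mathcal{D}}$. For necessity, you iterate along the radii with $1-r_n=K^{-n}(1-r)$, bracket $r_n\le t<r_{n+1}$, and set $\alpha=\log C/\log K$; this yields the power estimate with constant $C$, and the edge case $r=t$ is covered because $C>1$. The paper gives no proof of this lemma and simply quotes it from the literature; your iteration argument is the standard proof of this fact.
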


For a radial weight $\omega$. Define
$$\widetilde{\omega}(z)=\frac{\widehat{\omega}(z)}{1-|z|}, \qquad z\in \mathbb{D}.$$

\begin{lem}\label{le:weight_norm_equiv}(\cite[Lemma 9]{DDGW})
Let $\Psi$ be an Orlicz function satisfying the $\Delta_2$-condition and $\omega\in\mathcal{D}$. Then there exists constants $C_{1}=C_{1}(\omega)>0$ and $C_{2}=C_{2}(\omega)>0$ such that for every $f \in H(\mathbb{D})$,
$$C_{1}\int_{\mathbb{D}} \Psi\bigl(|f(z)|\bigr)\omega(z)\,{\rm d} A(z)\leq\int_{\mathbb{D}} \Psi\bigl(|f(z)|\bigr)\widetilde{\omega}(z)\,{\rm d} A(z)\leq C_{2}\int_{\mathbb{D}} \Psi\bigl(|f(z)|\bigr)\omega(z)\,{\rm d} A(z).$$
Consequently, the corresponding Luxemburg norms are equivalent:
$$\|f\|_{A^{\Psi}_{\omega}(\mathbb{D})}\thickapprox\|f\|_{A^{\Psi}_{\widetilde{\omega}}(\mathbb{D})},\qquad f\in H(\mathbb{D}).$$
\end{lem}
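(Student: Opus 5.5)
The statement to prove is Lemma \ref{le:weight_norm_equiv}: the modular comparison $\int\Psi(|f|)\omega\,{\rm d}A\approx\int\Psi(|f|)\widetilde{\omega}\,{\rm d}A$ for $f\in H(\mathbb{D})$ and $\omega\in\mathcal{D}$.

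The idea is to move both integrals onto the same circles and use the growth of integral means. In polar coordinates each side becomes $\int_0^1 M_\Psi(r,f)\,W(r)\,r\,{\rm d}r$, with $W=\omega$ or $W=\widetilde{\omega}$, where
$$M_\Psi(r,f)=\frac{1}{2\pi}\int_0^{2\pi}\Psi(|f(re^{i\theta})|)\,{\rm d}\theta .$$
Because $\Psi$ is convex and increasing, $\Psi(|f|)$ is subharmonic. Hence $M_\Psi(r,f)$ is nondecreasing in $r$.

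\textbf{Lower bound for $\widetilde{\omega}$.} I will show $\int\Psi(|f|)\widetilde{\omega}\,{\rm d}A\gtrsim\int\Psi(|f|)\omega\,{\rm d}A$. Write $\widehat{\omega}(r)=\int_r^1\omega(s)\,{\rm d}s$. Since $M_\Psi$ is increasing, Fubini gives
$$\int_0^1 M_\Psi(r)\frac{\widehat{\omega}(r)}{1-r}\,{\rm d}r\ \ge\ \int_0^1\omega(s)\int_0^s\frac{M_\Psi(r)}{1-r}\,{\rm d}r\,{\rm d}s .$$
For the inner integral, use $\omega\in\check{\mathcal{D}}$: take $r\in[1-K(1-s),\,s]$ when $s$ is close to $1$. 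On this range $M_\Psi(r)\ge M_\Psi(1-K(1-s))$, and $\int\frac{{\rm d}r}{1-r}=\log K$ over it. This gives a lower bound by $\log K\,\cdot M_\Psi(1-K(1-s))$.

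This evaluates $M_\Psi$ at a smaller radius than $s$, which is the wrong direction. So I will instead redo the Fubini step with the roles arranged so the conclusion reads
$$\int\Psi(|f|)\widetilde{\omega}\ \gtrsim\ \int_0^1 M_\Psi(t)\,\omega(t)\,{\rm d}t ,$$
with the comparison radius lying above $t$. Concretely, split $[0,1)$ into dyadic-type annuli $r_n=1-K^{-n}$, where $K$ is from $\check{\mathcal{D}}$. On $[r_n,r_{n+1}]$, $\widetilde{\omega}\approx\widehat{\omega}(r_n)K^{n}$, so
$$\int_{r_n}^{r_{n+1}}\widetilde{\omega}\ \approx\ \widehat{\omega}(r_n)\ \ge\ \int_{r_n}^{r_{n+1}}\omega .$$
Using $\check{\mathcal{D}}$, $\widehat{\omega}(r_n)-\widehat{\omega}(r_{n+1})\ge(1-1/C)\,\widehat{\omega}(r_n)$. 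Hence $\int_{r_n}^{r_{n+1}}\omega\approx\widehat{\omega}(r_n)\approx\int_{r_{n-1}}^{r_n}\widetilde{\omega}$. By monotonicity of $M_\Psi$ this yields
$$\int_{r_n}^{r_{n+1}}M_\Psi\,\omega\ \le\ M_\Psi(r_{n+1})\,\widehat{\omega}(r_n)\ \lesssim\ \int_{r_{n+1}}^{r_{n+2}}M_\Psi\,\widetilde{\omega},$$
where the last step uses $\widehat{\mathcal{D}}$ to compare $\widehat{\omega}(r_n)$ with $\widehat{\omega}(r_{n+1})$. Summing over $n$ gives the bound. The piece near $r=0$ is handled by subharmonicity and the positivity of the weights.

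\textbf{Upper bound for $\widetilde{\omega}$.} The reverse inequality follows from the same annular decomposition, run in the opposite direction:
$$\int_{r_n}^{r_{n+1}}M_\Psi\,\widetilde{\omega}\ \le\ M_\Psi(r_{n+1})\,\widehat{\omega}(r_n)\ \approx\ M_\Psi(r_{n+1})\int_{r_{n+1}}^{r_{n+2}}\omega\ \le\ \int_{r_{n+1}}^{r_{n+2}}M_\Psi\,\omega .$$
Here $\widehat{\omega}(r_n)\approx\widehat{\omega}(r_{n+1})$ comes from $\widehat{\mathcal{D}}$ via Lemma \ref{le: doubling weight property}, and $\widehat{\omega}(r_{n+1})\approx\int_{r_{n+1}}^{r_{n+2}}\omega$ comes from $\check{\mathcal{D}}$.

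\textbf{Norm equivalence and main obstacle.} The Luxemburg norm equivalence then follows by applying the modular inequality to $f/C$ and using \eqref{eq: remark 1} to absorb the constants $C_1,C_2$ into a rescaling of $C$. I expect the main obstacle to be bookkeeping: matching each annulus to the correct neighbouring annulus, so that monotonicity of $M_\Psi$ goes the right way, while keeping all constants dependent only on $\omega$.
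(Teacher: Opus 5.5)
Your argument is correct. The paper gives no proof here: the lemma is imported verbatim from \cite[Lemma 9]{DDGW}, so your proposal supplies a self-contained argument rather than reproducing one.

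Your approach has two ingredients. The first is monotonicity of $M_\Psi(r,f)$, which follows from subharmonicity of $\Psi(|f|)$ and needs only that $\Psi$ is convex and nondecreasing. The second is the annular comparison $\int_{r_n}^{r_{n+1}}\omega\approx\int_{r_n}^{r_{n+1}}\widetilde{\omega}\approx\widehat{\omega}(r_n)\approx\widehat{\omega}(r_{n+1})$ on $r_n=1-K^{-n}$, combined with a shift by one annulus so that monotonicity points the right way. This is the natural Orlicz analogue of the usual proof of the power case $\|f\|_{A^p(\omega)}\approx\|f\|_{A^p(\widetilde{\omega})}$.

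It also yields slightly more than stated. The $\Delta_2$-condition is never used in the modular inequalities. The constants depend only on $\omega$, through $K$, the $\check{\mathcal{D}}$ constant, and $C,\beta$ from Lemma \ref{le: doubling weight property}. The direction $\int\Psi(|f|)\omega\,{\rm d}A\lesssim\int\Psi(|f|)\widetilde{\omega}\,{\rm d}A$ needs only $\widehat{\mathcal{D}}$, with any fixed $K>1$, since $\int_{r_n}^{r_{n+1}}\omega\le\widehat{\omega}(r_n)$ is trivial. The class $\check{\mathcal{D}}$ enters only in the reverse direction, through $\int_{r_{n+1}}^{r_{n+2}}\omega\gtrsim\widehat{\omega}(r_{n+1})$.

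When writing it up, drop the abandoned Fubini paragraph and make three small points explicit. First, the polar factor $r$ you suppressed is harmless: you only bound from below on annuli $[r_{n+1},r_{n+2}]$ with $n\ge 0$, where $r\ge 1-1/K$. Second, since $r_0=0$ the annuli already exhaust $[0,1)$, so nothing separate is needed near the origin.

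Third, for the Luxemburg norms the right tool is convexity with $\Psi(0)=0$, that is, $\Psi(t/B)\le\Psi(t)/B$ for $B\ge 1$. This gives $\|f\|_{A^{\Psi}_{\widetilde{\omega}}(\mathbb{D})}\le\max\{1,C_2\}\|f\|_{A^{\Psi}_{\omega}(\mathbb{D})}$ and $\|f\|_{A^{\Psi}_{\omega}(\mathbb{D})}\le\max\{1,C_1^{-1}\}\|f\|_{A^{\Psi}_{\widetilde{\omega}}(\mathbb{D})}$. By contrast, \eqref{eq: remark 1} only gives two-sided comparability with an uncontrolled constant $C_B$, so it cannot by itself push the modular below $1$.
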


\begin{lem}(\cite[Lemma 10]{DDGW})\label{le: pointwise estimate}
Let $\Psi$ be an Orlicz function satisfying the $\Delta_{2}$-condition and $\omega\in \mathcal{D}$. Then there exists a constant $C=C(\omega,\Psi)>0$ such that
$$|f(z)|\leq C\Psi ^{-1}\left(\frac {1} { \omega (S(z))} \right)\|f\|_{A^{\Psi}_{\omega}(\mathbb{D})}$$
for any $f \in A^{\Psi}_{\omega}(\mathbb{D})$ and $z\in \mathbb{D}$.
\end{lem}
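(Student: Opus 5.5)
The plan is to prove the estimate first for normalized functions, using subharmonicity of $\Psi(|f|)$ on a pseudohyperbolic disk, and then to use convexity of $\Psi$ to pass from a bound on $\Psi(|f(z)|)$ to a bound on $|f(z)|$.

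\emph{Normalization.} By homogeneity it suffices to treat $f\neq 0$ with $\|f\|_{A^{\Psi}_{\omega}(\mathbb{D})}=1$. Since $\Psi$ is continuous, monotone convergence in the definition of the Luxemburg norm gives
$$\int_{\mathbb{D}}\Psi(|f|)\,\omega\,{\rm d}A\le 1.$$
Lemma \ref{le:weight_norm_equiv} then yields $\int_{\mathbb{D}}\Psi(|f|)\,\widetilde{\omega}\,{\rm d}A\le C_2$.

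\emph{Subharmonic mean-value step.} Since $\Psi$ is convex, nondecreasing and $\Psi(0)=0$, and $|f|$ is subharmonic, the function $\Psi(|f|)$ is subharmonic. Fix $r=1/2$. Using the sub-mean value property over the Euclidean disk $E_r(z)$ (see \eqref{eq:Euclidean disk}, since the mean value property over a disk is taken about its centre, one may instead use a Euclidean disk centred at $z$ of radius $\approx 1-|z|$ contained in $E_r(z)$), together with \eqref{eq: area-estimate}, we get
$$\Psi(|f(z)|)\lesssim \frac{1}{(1-|z|)^2}\int_{E_r(z)}\Psi(|f(w)|)\,{\rm d}A(w).$$

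\emph{Comparability of the weight on $E_r(z)$.} This is the key point. For $w\in E_r(z)$ we have $1-|w|\approx 1-|z|$ by \eqref{eq: ratio-estimate}. We need $\widehat{\omega}(w)\approx\widehat{\omega}(z)$.
\begin{itemize}
\item Take $t=\max(|z|,|w|)$ and $s=\min(|z|,|w|)$.
\item Since $\widehat{\omega}$ is nonincreasing, $\widehat{\omega}(t)\le\widehat{\omega}(s)$.
\item Conversely, Lemma \ref{le: doubling weight property}(ii) gives $\widehat{\omega}(s)\le C\bigl(\frac{1-s}{1-t}\bigr)^{\beta}\widehat{\omega}(t)\lesssim\widehat{\omega}(t)$.
\end{itemize}
Hence $\widetilde{\omega}(w)\approx\widetilde{\omega}(z)$ on $E_r(z)$, with constants depending only on $\omega$ and $r$. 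For $|z|$ small this is trivial, because everything is comparable to $\widehat{\omega}(0)>0$. Consequently
$$\Psi(|f(z)|)\lesssim\frac{1}{(1-|z|)^2\,\widetilde{\omega}(z)}\int_{E_r(z)}\Psi(|f|)\,\widetilde{\omega}\,{\rm d}A\lesssim\frac{1}{(1-|z|)\,\widehat{\omega}(z)}\approx\frac{1}{\omega(S(z))},$$
where the last step uses \eqref{eq:carleson-square-weight}.

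\emph{Inverting $\Psi$.} We now have $\Psi(|f(z)|)\le K/\omega(S(z))$ for some constant $K\ge1$ depending only on $\omega$ and $\Psi$. Convexity and $\Psi(0)=0$ give $\Psi(t/K)\le\Psi(t)/K$ for $K\ge 1$. Therefore
$$\Psi\bigl(|f(z)|/K\bigr)\le \frac{1}{\omega(S(z))}.$$
Since $\Psi$ is strictly increasing (convex, positive on $(0,\infty)$, vanishing at $0$), this gives $|f(z)|\le K\,\Psi^{-1}\bigl(1/\omega(S(z))\bigr)$. Undoing the normalization gives the lemma.

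The main obstacle is the comparability of $\widetilde{\omega}$ over pseudohyperbolic disks, which is exactly where the hypothesis $\omega\in\widehat{\mathcal D}$ (via Lemma \ref{le: doubling weight property}) enters. The $\check{\mathcal D}$ part is used only through Lemma \ref{le:weight_norm_equiv}.
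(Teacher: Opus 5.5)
Your proof is correct. The paper gives no proof of its own here (it cites [DDGW, Lemma 10]), but your argument follows the scheme the paper uses for the companion estimate in Lemma \ref{le: difference estimate}: a local mean-value bound on a pseudohyperbolic disk, comparability of $\widetilde{\omega}$ there via Lemma \ref{le: doubling weight property}, the norm equivalence of Lemma \ref{le:weight_norm_equiv}, and inversion of $\Psi$, with the inversion step $\Psi(t/K)\le \Psi(t)/K$ made explicit where the paper leaves it implicit.
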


The following lemma provides a estimate for the difference of analytic functions in terms of the pseudo-hyperbolic distance, which will be useful in studying difference of composition operators.
\begin{lem}\label{le: difference estimate}
Let $\Psi$ be an Orlicz function satisfying the $\Delta_{2}$-condition and $\omega\in \mathcal{D}$. Let $0 <r_{1}<r_{2}< 1$. Then there exists a constant $C=C(\omega,\Psi,r_{1},r_{2})>0$ such that
 $$| f (a) - f ( z ) |\leq C\Psi^{-1}\left(\frac {1 } {\omega(S(a))}\right)\rho ( a , z )\|f\|_{A^{\Psi}_{\omega}(\mathbb{D})}$$
for any $f \in A^{\Psi}_{\omega}(\mathbb{D})$, $a\in \mathbb{D}$ and $z\in E _{r_{1}} (a)$.  
\end{lem}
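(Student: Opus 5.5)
We fix $a\in\mathbb D$, $z\in E_{r_1}(a)$ and $f$ with $\|f\|_{A^{\Psi}_{\omega}(\mathbb{D})}=1$. The plan is to reduce the estimate to a Schwarz-lemma type bound for a bounded holomorphic function on a pseudohyperbolic disk, and to get the sup bound from Lemma \ref{le: pointwise estimate}.

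\textbf{Step 1: reduction to a function on $\mathbb D$.} Let $\sigma_a(\zeta)=\frac{a-\zeta}{1-\bar a\zeta}$ be the involutive disk automorphism, and set $g=f\circ\sigma_a$. For $w=\sigma_a(z)$ we have $|w|=\rho(a,z)<r_1$. Then
$$|f(a)-f(z)|=|g(0)-g(w)|.$$

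\textbf{Step 2: a sup bound on $E_{r_2}(a)$.} We need a bound on $\sup_{|\zeta|\le r_2}|g(\zeta)|=\sup_{u\in \overline{E_{r_2}(a)}}|f(u)|$. Lemma \ref{le: pointwise estimate} gives
$$|f(u)|\le C\Psi^{-1}\bigl(1/\omega(S(u))\bigr).$$
It remains to compare $\omega(S(u))$ with $\omega(S(a))$ for $u\in E_{r_2}(a)$.
\begin{itemize}
\item By \eqref{eq: ratio-estimate}, $1-|u|\approx 1-|a|$, with constants depending only on $r_2$.
\item By \eqref{eq:carleson-square-weight} together with Lemmas \ref{le: doubling weight property} and \ref{le:reverse doubling weight property}, $\widehat\omega(u)\approx\widehat\omega(a)$ whenever $1-|u|\approx 1-|a|$. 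Hence $\omega(S(u))\approx\omega(S(a))$.
\item Since $\Psi^{-1}$ is concave and increasing with $\Psi^{-1}(0)=0$, we have $\Psi^{-1}(\lambda t)\le\max(1,\lambda)\Psi^{-1}(t)$.
\end{itemize}
Combining these gives
$$\sup_{|\zeta|\le r_2}|g(\zeta)|\le C_1\Psi^{-1}\bigl(1/\omega(S(a))\bigr)=:M,$$
with $C_1=C_1(\omega,\Psi,r_2)$.

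\textbf{Step 3: Schwarz lemma.} Apply the Schwarz lemma to $h(\zeta)=\frac{g(r_2\zeta)-g(0)}{2M}$. This maps $\mathbb D$ into $\overline{\mathbb D}$ and vanishes at $0$, so $|h(\zeta)|\le|\zeta|$. Taking $\zeta=w/r_2$, which lies in the disk because $|w|<r_1<r_2$, we get
$$|g(w)-g(0)|\le \frac{2M}{r_2}|w|=\frac{2C_1}{r_2}\Psi^{-1}\bigl(1/\omega(S(a))\bigr)\rho(a,z).$$
For general $f\neq 0$, homogeneity of both sides finishes the proof.

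\textbf{Main obstacle.} The only delicate point is Step 2, the comparability $\omega(S(u))\approx\omega(S(a))$ on $E_{r_2}(a)$. One could instead use a Cauchy estimate with radius $r_2$ to obtain a bound on $|g'|$ on $|\zeta|\le r_1$ and then integrate along the segment $[0,w]$. Either route relies on the same doubling comparison.
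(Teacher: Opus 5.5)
Your proof is correct, but it takes a different route from the paper.

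The paper starts from \cite[Lemma 3.5]{GW}, the integral-mean bound $|f(a)-f(z)|\lesssim \frac{\rho(a,z)}{\omega(S(a))}\int_{E_{r_2}(a)}|f|\,\widetilde{\omega}\,{\rm d}A$. It then uses $\int_{E_{r_2}(a)}\widetilde{\omega}\,{\rm d}A\approx\omega(S(a))$, Jensen's inequality and the $\Delta_2$-condition to reach the local Orlicz inequality \eqref{le: difference founction estimate}. Only then does it normalize $f$ and invoke Lemma \ref{le:weight_norm_equiv}.

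You instead transfer the pointwise bound of Lemma \ref{le: pointwise estimate} uniformly to $E_{r_2}(a)$, via $\omega(S(u))\approx\omega(S(a))$ and the concavity of $\Psi^{-1}$, and finish with the Schwarz lemma applied to $f\circ\sigma_a$. Every step checks out. The comparison $\widehat{\omega}(u)\approx\widehat{\omega}(a)$ actually needs only the monotonicity of $\widehat{\omega}$ and Lemma \ref{le: doubling weight property}. Your argument is more elementary and self-contained, with no external difference lemma and no Jensen step. It also gives an explicit constant $2C_1/r_2$ that does not depend on $r_1$.

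What the paper's route buys is the intermediate estimate \eqref{le: difference founction estimate}. It bounds $\Psi\bigl(|f(a)-f(z)|/\rho(a,z)\bigr)$ by a local average of $\Psi(|f|)$ over $E_{r_2}(a)$, rather than by the global norm. That local form is what is used later, in the estimate of $I_1(f)$ in the proof of Lemma \ref{le: symbols-difference}. There it is integrated in $z$ and combined with Fubini's theorem and the Carleson property of $\mu_{\varphi,\omega}$. Your conclusion, with $\|f\|_{A^{\Psi}_{\omega}(\mathbb{D})}$ on the right, would not suffice for that step.

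Your method adapts easily, though. Replace Lemma \ref{le: pointwise estimate} by the subharmonicity of $\Psi(|f|)$ on a slightly larger disk $E_{r_3}(a)$ with $r_3>r_2$, then run the same Schwarz-lemma step. This gives a local analogue with $E_{r_3}(a)$ in place of $E_{r_2}(a)$, which would serve equally well in Lemma \ref{le: symbols-difference}.
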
  
\begin{proof}Fix $z\in E _ { r_{1}} (a)$, $0 <r_{1}<r_{2}< 1$. By \cite[Lemma 3.5]{GW}, we have
$$| f (a) - f (z) |\leq  C_{1}\frac { \rho ( a , z ) } {\omega(S(a))} \int _ { E _ { r_{2}} (a) }| f (\xi)|  \widetilde{\omega}(\xi){\rm d} A(\xi) $$
for some constant $C_{1}=C_{1}(\omega,r_{1},r_{2})>0$. Since $\xi\in E _ { r_{2}} (a)$ and $\omega\in \mathcal{D}$, it follows from \eqref{eq: kernel-estimate}, Lemma \ref{le: doubling weight property} and Lemma \ref{le:reverse doubling weight property}  that $1-|\xi|\thickapprox1-|a|$ and $\widehat{\omega}(\xi)\thickapprox\widehat{\omega}(a)$.
Moreover, by \eqref{eq: area-estimate} and \eqref{eq:carleson-square-weight}, we have
\begin{align*}
  \int _ { E _ { r_{2}} (a) } \widetilde{\omega}(\xi){\rm d} A(\xi)  & =   \int _ { E _ { r_{2}} (a) } \frac{\widehat{\omega}(\xi)}{1-|\xi|}{\rm d} A(\xi) \\
   & \thickapprox \int _ { E _ { r_{2}} (a) } \frac{\widehat{\omega}(a)}{1-|a|}{\rm d} A(\xi)\\
   & \thickapprox  \frac{\widehat{\omega}(a)}{1-|a|} (1-|a|)^{2}\\
   & = \widehat{\omega}(a) (1-|a|)\\
   & \thickapprox \omega(S(a)).
\end{align*}

 Hence, by the non-decreasing property of $\Psi$, Jensen's inequality, and the $\Delta_{2}$-condition, we obtain
\begin{align}\label{le: difference founction estimate} 
\Psi\left(\frac{| f (a) - f ( z ) |}{\rho ( a , z ) }\right) & \leq \Psi\bigg( \frac {C_{1} } {\omega(S(a))} \int _ { E _ { r_{2}} (a) }| f (\xi)|  \widetilde{\omega}(\xi){\rm d} A(\xi)  \bigg)  \nonumber\\
                          & \leq C_{2}\frac { 1 } {\omega(S(a))} \int _ { E _ { r_{2}} (a) }\Psi(| f (\xi)| ) \widetilde{\omega}(\xi){\rm d} A(\xi)
\end{align}
where the constat $C_{2}$ depends only on $\omega,r_{1},r_{2},\Psi$. 
Let $f \in A^{\Psi}_{\omega}(\mathbb{D})$ with $\|f\|_{A^{\Psi}_{\omega}(\mathbb{D})}\neq0$, replacing $f$ in the above inequality by $\frac{f}{\|f\|_{A^{\Psi}_{\omega}(\mathbb{D})}}$ and applying Lemma \ref{le:weight_norm_equiv}, we obtain
\begin{align*}
  \Psi\left(\frac{| f (a) - f ( z ) |}{\rho ( a , z )\|f\|_{A^{\Psi}_{\omega}(\mathbb{D})} }\right) & \leq C_{2}\frac {1 } {\omega(S(a))} \int _ { E _ { r_{2}} (a) } \Psi\left(\frac{| f|}{\|f\|_{A^{\Psi}_{\omega}(\mathbb{D})}} \right)\widetilde{\omega}{\rm d} A \\
   & \leq C_{2}\frac {1 } {\omega(S(a))} \int _ {\mathbb{D}} \Psi\left(\frac{| f|}{\|f\|_{A^{\Psi}_{\omega}(\mathbb{D})}} \right)\widetilde{\omega}{\rm d} A \\
   & \lesssim\frac {1 } {\omega(S(a))}.
\end{align*}
Therefore, 
$$| f (a) - f ( z ) |\leq C\Psi^{-1}\left(\frac {1 } {\omega(S(a))}\right)\rho ( a , z )\|f\|_{A^{\Psi}_{\omega}(\mathbb{D})}$$
for some constat $C=C(\omega,r_{1},r_{2},\Psi)>0$.
\end{proof}

The following lemma characterization of Carleson measures and vanishing Carleson measures for $A^{\Psi}_{\omega}(\mathbb{D})$ follows from \cite[Theorems 6 and 7]{DDGW}, \cite[(2.1)]{DDGW} and the finite  covering property of pseudohyperbolic disk.

\begin{lem}\label{le:Carleson measure}
Let $\Psi$ be an Orlicz function satisfying the $\Delta_{2}$-condition and $\omega\in \mathcal{D}$, $\mu$ be a finite positive  Borel measure on $\mathbb{D}$. Then there exists $r_{0}=r_{0}(\omega)\in(0,1)$ such that for each fixed $r\in (r_{0},1)$, the following statements hold:

{\rm (i)}  $\mu$ is a Carleson measure for $A^{\Psi}_{\omega}(\mathbb{D})$ if and only if $$\sup_{z\in\mathbb{D}}\frac{\mu(E_{r}(z))}{\omega(E_{r}(z))}<\infty.$$

{\rm (ii)} $\mu$ is a vanishing Carleson measure for $A^{\Psi}_{\omega}(\mathbb{D})$ if and only if $$\lim_{|z|\to1}\frac{\mu(E_{r}(z))}{\omega(E_{r}(z))}=0.$$

\end{lem}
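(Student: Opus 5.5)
The plan is to reduce both statements to the Carleson-square characterizations in \cite[Theorems 6 and 7]{DDGW}. I take these to say that $\mu$ is a (vanishing) Carleson measure for $A^{\Psi}_{\omega}(\mathbb{D})$ if and only if $\sup_{a\in\mathbb{D}}\mu(S(a))/\omega(S(a))<\infty$ (respectively, $\mu(S(a))/\omega(S(a))\to0$ as $|a|\to1$). It then remains to show that for $r$ close enough to $1$, the quotients over Carleson squares and over pseudohyperbolic disks control each other.

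\textbf{Step 1: comparability of $\omega(E_r(z))$ and $\omega(S(z))$.} I would use \cite[(2.1)]{DDGW}, which I take to give $\omega(E_r(z))\approx\omega(S(z))$ for $r\in(r_0,1)$, where $r_0=r_0(\omega)$. If it has to be verified directly, the argument goes as follows. By \eqref{eq:Euclidean disk}, $E_r(z)$ contains a polar rectangle $\{\xi:\ |\arg(\xi\bar z)|\le c(1-|z|),\ |z|\le|\xi|\le 1-\frac{1-|z|}{K}\}$, with $c>0$ and $K>1$ depending only on $r$; its $\omega$-measure is $\approx(1-|z|)\bigl(\widehat\omega(|z|)-\widehat\omega(1-\frac{1-|z|}{K})\bigr)$. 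Choosing $r$ close to $1$ makes $K$ large, and the reverse doubling in Lemma \ref{le:reverse doubling weight property} then makes the bracket $\gtrsim\widehat\omega(z)$. Together with \eqref{eq:carleson-square-weight}, this gives $\omega(E_r(z))\gtrsim\omega(S(z))$.

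For the upper bound, $E_r(z)\subset S(z')$ for a point $z'=\lambda z$ with $1-|z'|\approx 1-|z|$. Lemma \ref{le: doubling weight property} then gives $\omega(E_r(z))\le\omega(S(z'))\approx\omega(S(z))$. I expect this step to be the main obstacle: it is exactly where $\omega\in\check{\mathcal D}$ and the threshold $r_0$ are needed.

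\textbf{Step 2: squares control disks.} Suppose the Carleson-square condition holds. Then
\[
\mu(E_r(z))\le\mu(S(z'))\lesssim\omega(S(z'))\approx\omega(S(z))\approx\omega(E_r(z)).
\]
In the vanishing case, $|z'|\to1$ as $|z|\to1$, so the quotient $\mu(E_r(z))/\omega(E_r(z))$ tends to $0$.

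\textbf{Step 3: disks control squares.} Fix $a$ and decompose $S(a)$ into dyadic top-halves $R_{n,k}$ of Carleson boxes with sides $\approx 2^{-n}(1-|a|)$. Each $R_{n,k}$ is covered by a bounded number $M=M(r)$ of disks $E_r(z_{n,k,j})$, and by the finite covering property the family of all these disks has bounded overlap. Moreover, every disk lies in $S(a^*)$ for some $a^*$ with $1-|a^*|\approx1-|a|$. Hence
\[
\mu(S(a))\le\sum_{n,k,j}\mu(E_r(z_{n,k,j}))\le \Bigl(\sup_{n,k,j}\frac{\mu(E_r(z_{n,k,j}))}{\omega(E_r(z_{n,k,j}))}\Bigr)\sum_{n,k,j}\omega(E_r(z_{n,k,j})).
\]
The last sum is $\lesssim\omega(S(a^*))\approx\omega(S(a))$, by bounded overlap and Lemma \ref{le: doubling weight property}. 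This proves the converse in (i).

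For (ii), note that all the centers satisfy $1-|z_{n,k,j}|\lesssim1-|a|$. So the supremum above is taken only over disks whose centers tend to $\mathbb{T}$ as $|a|\to1$, and it therefore tends to $0$; this gives the converse in (ii). Finiteness of $\mu$ is only used to guarantee that $\mu(S(0))=\mu(\mathbb{D})$ is finite, which handles the bounded range of $a$ in the non-vanishing statement.
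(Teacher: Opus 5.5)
Your proposal is correct and follows the route the paper indicates. The paper's proof is just a citation of the Carleson-square characterizations \cite[Theorems 6 and 7]{DDGW}, the comparability $\omega(E_r(z))\approx\omega(S(z))$ for $r>r_0$ from \cite[(2.1)]{DDGW}, and the finite covering property of pseudohyperbolic disks; you write out those three ingredients in detail. One small correction: $\omega\in\check{\mathcal D}$ and the threshold $r_0$ are needed only for the lower bound $\omega(E_r(z))\gtrsim\omega(S(z))$, not for the upper bound, which uses only $\omega\in\widehat{\mathcal D}$.
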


For each $\omega\in \mathcal{D}$, there exists $r_{0}=r_{0}(\omega)\in(0,1)$ such that $$ \omega(E_{r}(z))\thickapprox \widehat{\omega}(z)(1-|z|)\thickapprox\omega(S(z)), \,\,z\in\mathbb{D}$$ for each fixed $r\in (r_{0},1)$. See \cite[(2.1)]{DDGW}. 
 
\begin{remark}
By \cite[Lemma 2.4]{GW} and Lemma \ref{le:Carleson measure}, for any $0<p<\infty$, $\mu$ is a (vanishing) Carleson measure for $A^{p}(\omega)$ if and only if it is a (vanishing) Carleson measure for $A^{\Psi}_{\omega}(\mathbb{D})$. 
\end{remark}

Given $\varphi\in S(\mathbb{D})$, $\mu_{\varphi,\omega}$ denote the pull-back measure $$\mu_{\varphi,\omega}(E)=\int_{\varphi^{-1}(E)} \omega(z) {\rm d}A (z)$$
for every Borel subset $E\subset\mathbb{D}$.

The connection between composition operators and Carleson measures comes from the standard identity
\begin{align}\label{eq:change-of-variable}
\int _ { \mathbb{D} } \Psi( | f \circ \varphi (z) |) {\rm d} \mu (z) = \int _ { \mathbb{D} } \Psi(| f (z) |)  {\rm d}( \mu \circ \varphi ^{- 1})  (z)
\end{align}
for any $f\in H(\mathbb{D})$, where $ (\mu \circ \varphi ^{- 1})(E)=\mu(\varphi ^{- 1}(E))$. \\

We recall the following useful property of doubling weights.
\begin{lem}(\cite[Lemma 2]{SQL})\label{le:modified doubling weight}
Let $\omega\in \mathcal{D}$ and $0<\delta<\alpha$, where $\alpha=\alpha(\omega)>0$  is the constant appearing in Lemma \ref{le:reverse doubling weight property}. Let $$\omega_{\delta}(z)=\frac{\omega(z)}{(1-|z|)^{\delta}}, \qquad z\in\mathbb{D}, $$ then there exist constants $C=C(\omega,\delta)>0$ such that 
$$\frac{\widehat{\omega}(z)}{(1-|z|)^\delta}\leq\widehat{\omega_\delta}(z)\leq C\frac{\widehat{\omega}(z)}{(1-|z|)^\delta}$$ and  $\omega_{\delta}\in \mathcal{D}$.
\end{lem}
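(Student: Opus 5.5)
The claim is a cited lemma (\cite[Lemma 2]{SQL}), so the plan is to reprove it directly from Lemmas \ref{le: doubling weight property} and \ref{le:reverse doubling weight property}.

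\textbf{Lower bound.} For $s\ge r=|z|$ we have $(1-s)^{-\delta}\ge (1-r)^{-\delta}$. Hence
$$\widehat{\omega_\delta}(r)=\int_r^1\omega(s)(1-s)^{-\delta}\,ds\ge (1-r)^{-\delta}\widehat{\omega}(r).$$

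\textbf{Upper bound.} Decompose $[r,1)$ dyadically by $r_k=1-2^{-k}(1-r)$, $k\ge0$. On $[r_k,r_{k+1})$ we have $(1-s)^{-\delta}\le 2^{(k+1)\delta}(1-r)^{-\delta}$. Therefore
$$\widehat{\omega_\delta}(r)\le (1-r)^{-\delta}\sum_{k\ge0}2^{(k+1)\delta}\,\widehat{\omega}(r_k).$$
Lemma \ref{le:reverse doubling weight property} with $t=r_k$ gives $\widehat{\omega}(r_k)\le C2^{-k\alpha}\widehat{\omega}(r)$. Since $\delta<\alpha$, the series $\sum_k 2^{k(\delta-\alpha)}$ converges, which yields
$$\widehat{\omega_\delta}(r)\le C(\omega,\delta)\,\widehat{\omega}(r)(1-r)^{-\delta}.$$
This summation is the only place the hypothesis $\delta<\alpha$ enters, and it is the key step.

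\textbf{Membership in $\mathcal{D}$.} By the two-sided estimate, $\widehat{\omega_\delta}(r)\approx\widehat{\omega}(r)(1-r)^{-\delta}$. It therefore suffices to verify both doubling conditions for the comparison function $h(r)=\widehat{\omega}(r)(1-r)^{-\delta}$.

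For $\widehat{\mathcal{D}}$: since $\omega\in\widehat{\mathcal{D}}$, $\widehat\omega(r)\le C\widehat\omega(\tfrac{1+r}{2})$. Also $(1-r)^{-\delta}=2^{-\delta}\bigl(1-\tfrac{1+r}{2}\bigr)^{-\delta}$. Together these give $h(r)\lesssim h(\tfrac{1+r}{2})$, and hence $\widehat{\omega_\delta}(r)\lesssim\widehat{\omega_\delta}(\tfrac{1+r}{2})$. (Radiality and integrability of $\omega_\delta$ follow from the finiteness of $\widehat{\omega_\delta}(0)$ just shown.)

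For $\check{\mathcal{D}}$: write $t=1-\tfrac{1-r}{K}$ and use Lemma \ref{le:reverse doubling weight property}, which gives $\widehat\omega(t)\le CK^{-\alpha}\widehat\omega(r)$. Then
$$\widehat{\omega_\delta}(t)\le C'\widehat\omega(t)(1-t)^{-\delta}\le C''K^{\delta-\alpha}\,\widehat\omega(r)(1-r)^{-\delta}\le C''K^{\delta-\alpha}\,\widehat{\omega_\delta}(r).$$
Because $\delta<\alpha$, choosing $K$ large makes $C''K^{\delta-\alpha}<1$, which is exactly the condition $\omega_\delta\in\check{\mathcal{D}}$.
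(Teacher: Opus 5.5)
The paper gives no proof of this lemma. It is imported from \cite[Lemma 2]{SQL}, so there is no internal argument to compare yours against.

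Your proof is correct and self-contained. The lower bound follows immediately from the monotonicity of $(1-s)^{-\delta}$. The dyadic decomposition $r_k=1-2^{-k}(1-r)$, together with Lemma \ref{le:reverse doubling weight property} applied at $t=r_k$, gives $\widehat{\omega_\delta}(r)\le C2^{\delta}(1-2^{\delta-\alpha})^{-1}\widehat{\omega}(r)(1-r)^{-\delta}$. This also settles finiteness, positivity and integrability of $\omega_\delta$.

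Transferring the $\widehat{\mathcal{D}}$ and $\check{\mathcal{D}}$ conditions through the two-sided comparison is legitimate. In the $\check{\mathcal{D}}$ step, the comparison constant $C'$ and the constant from Lemma \ref{le:reverse doubling weight property} do not depend on $K$, so $K$ may be chosen afterwards to make $C''K^{\delta-\alpha}<1$.

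An equivalent route to the two-sided estimate is integration by parts on $[r,t]$ followed by $t\to1$. This gives $\widehat{\omega_\delta}(r)=\widehat{\omega}(r)(1-r)^{-\delta}+\delta\int_r^1\widehat{\omega}(s)(1-s)^{-\delta-1}\,ds$. The boundary term at $1$ vanishes because $\widehat{\omega}(s)(1-s)^{-\delta}\lesssim(1-s)^{\alpha-\delta}$. This identity yields the lower bound for free, and the upper bound with the explicit constant $1+C\delta/(\alpha-\delta)$. It is slightly more compact than your dyadic sum but not essentially different.

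One small correction to your commentary: the summation is not the only place where $\delta<\alpha$ enters. Your $\check{\mathcal{D}}$ step also uses it, to make $K^{\delta-\alpha}$ small. This is necessary, since reverse doubling of $\widehat{\omega}(r)(1-r)^{-\delta}$ requires the decay rate of $\widehat{\omega}$ to beat the factor $K^{\delta}$.
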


\section{ Proof of Theorem 1.1}

In this section, we will prove Theorem 1.1. Before proving the main theorem, we need the following auxiliary results.

\begin{lem}\label{le: Orlicz-Carleson}
Let $\Psi$ be an Orlicz function satisfying the $\Delta_{2}$-condition, $\omega\in \mathcal{D}$ and $\varphi \in S(\mathbb{D})$. Set $\delta=\min\{\frac{\alpha}{2},1\}$, where $\alpha=\alpha(\omega)>0$  is the constant appearing in Lemma \ref{le:reverse doubling weight property}. 
For each fixed $r\in (\max\{r_{0}(\omega),r_{0}(\omega_{\delta})\},1)$, there exists a constant $C=C(\omega,r,\Psi,\delta)>0$ such that for every $\varepsilon> 0$ and every Borel function $H: \mathbb{D} \to [0, 1]$ satisfying
$$\sup_ { z \in \mathbb { D } } \left[ H (z) R _ { \varphi } (z) \right] \leq\varepsilon ,$$
we have
$$\int _ { \mathbb { D } } \Psi(| f \circ \varphi |)  H  \omega{\rm d} A  \leq C \varepsilon^{\delta} \int _ { \mathbb { D } } \Psi(| f |)  \omega{\rm d} A  $$
 for any $f\in A^{\Psi}_{\omega}(\mathbb{D})$.
\end{lem}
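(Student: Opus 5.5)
The plan is to use $H\le 1$ and $H R_\varphi\le\varepsilon$ to bound $H\le H^{\delta}\le \varepsilon^{\delta}R_\varphi^{-\delta}$ pointwise. Since $\delta\le 1$ and $H\in[0,1]$, we have $H\le H^\delta$, and on the set where $H>0$ the hypothesis gives $H^\delta\le \varepsilon^\delta R_\varphi(z)^{-\delta}$. This yields
\begin{align*}
\int_{\mathbb{D}}\Psi(|f\circ\varphi|)H\omega\,{\rm d}A
\le \varepsilon^{\delta}\int_{\mathbb{D}}\Psi(|f\circ\varphi(z)|)\frac{(1-|\varphi(z)|^2)^{\delta}}{(1-|z|^2)^{\delta}}\,\omega(z)\,{\rm d}A(z)
\lesssim \varepsilon^{\delta}\int_{\mathbb{D}}\Psi(|f\circ\varphi|)(1-|\varphi|)^{\delta}\,\omega_\delta\,{\rm d}A ,
\end{align*}
with $\omega_\delta=\omega/(1-|z|)^\delta$ as in Lemma \ref{le:modified doubling weight}. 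Because $\delta<\alpha$, that lemma gives $\omega_\delta\in\mathcal{D}$ and $\widehat{\omega_\delta}\approx\widehat\omega/(1-|z|)^\delta$. The task is then reduced to showing
\begin{align*}
\int_{\mathbb{D}}\Psi(|f\circ\varphi|)(1-|\varphi|)^\delta\omega_\delta\,{\rm d}A\lesssim\int_{\mathbb{D}}\Psi(|f|)\omega\,{\rm d}A .
\end{align*}

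For this I set $g=\Psi(|f|)$ and $u(w)=g(w)(1-|w|)^\delta$, so the left side is $\int u\,{\rm d}\mu$ with $\mu=\mu_{\varphi,\omega_\delta}$ the pull-back measure, via \eqref{eq:change-of-variable}. The point is to treat $\mu$ as a Carleson measure for $A^{\Psi}_{\omega_\delta}$. Composition operators are bounded on $A^1(\omega_\delta)$ for $\omega_\delta\in\widehat{\mathcal D}$ by Peláez--Rättyä, so $\mu$ is a Carleson measure for $A^1(\omega_\delta)$. By the Remark after Lemma \ref{le:Carleson measure}, and using $r>r_0(\omega_\delta)$, this gives $\mu(E_r(a))\lesssim\omega_\delta(E_r(a))\approx\widehat\omega(a)(1-|a|)^{1-\delta}$.

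Next I would discretize $u$ over pseudohyperbolic disks. Since $\Psi$ is convex and nondecreasing and $|f|$ is subharmonic, Jensen's inequality (as in the proof of Lemma \ref{le: difference estimate}) gives $\Psi(|f(w)|)\lesssim \frac{1}{(1-|w|)^2}\int_{E_{r'}(w)}\Psi(|f|)\,{\rm d}A$, with the $\Delta_2$-condition absorbing constants. Here $\Psi(|f|)$ need not be subharmonic, but $\Psi(\text{mean of }|f|)\le$ mean of $\Psi(|f|)$ suffices. Covering $\mathbb{D}$ by a lattice of disks $E_r(a_k)$ with finite overlap, I get
\begin{align*}
\int u\,{\rm d}\mu\lesssim\sum_k \mu(E_r(a_k))\,\frac{(1-|a_k|)^\delta}{(1-|a_k|)^2}\int_{E_{r''}(a_k)}\Psi(|f|)\,{\rm d}A
\lesssim\sum_k\frac{\widehat\omega(a_k)}{1-|a_k|}\int_{E_{r''}(a_k)}\Psi(|f|)\,{\rm d}A .
\end{align*}
Since $\widehat\omega$ and $1-|\cdot|$ are essentially constant on these disks, the right side is $\approx\int\Psi(|f|)\widetilde\omega\,{\rm d}A$. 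Lemma \ref{le:weight_norm_equiv} then bounds this by $\int\Psi(|f|)\omega\,{\rm d}A$, which finishes the proof.

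The main obstacle is keeping the constants in the discretization step uniform. The disk radius $r$ must exceed $r_0(\omega_\delta)$ so that Lemma \ref{le:Carleson measure} applies to $\omega_\delta$. The enlarged disks $E_{r''}$ used in the Jensen step also need bounded overlap, and that step must be done with $\Psi$ applied inside rather than with a power function. If the Peláez--Rättyä boundedness citation is not available in the needed form, a fallback is to verify the Carleson condition for $\mu_{\varphi,\omega_\delta}$ directly from Littlewood subordination on $A^1(\omega_\delta)$.
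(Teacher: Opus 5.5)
Your proof is correct and is essentially the paper's argument. It uses the same bound $H\le H^{\delta}\le(\varepsilon/R_\varphi)^{\delta}$, the passage to $\omega_\delta\in\mathcal{D}$, the Carleson estimate for $\mu_{\varphi,\omega_\delta}$, local averaging of $\Psi(|f|)$, and Lemma \ref{le:weight_norm_equiv}; the only difference is that the paper first shows $\mu(E_r(\xi))\lesssim\varepsilon^{\delta}\omega(E_r(\xi))$ for the $H$-weighted pullback measure and then applies Fubini, whereas you put the bound into the integrand and discretize over a lattice. One small correction: $\Psi(|f|)$ is in fact subharmonic, being a convex nondecreasing function of the subharmonic function $|f|$, so your Jensen detour is valid but unnecessary.
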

\begin{proof}
For any Borel set  $E\subset\mathbb { D } $, let $\mu(E)=\int_{\varphi^{-1}(E)} H \omega{\rm d} A$. By \eqref{eq:change-of-variable}, we have$$\int _ { \mathbb { D }} \Psi(| f \circ \varphi |)  H \omega {\rm d} A  =\int _ { \mathbb { D }} \Psi(|f|)  H \omega {\rm d} A \circ \varphi ^{-1}=\int _ { \mathbb { D }} \Psi(|f|) {\rm d} \mu.$$
 Fix $r\in (\max\{r_{0}(\omega),r_{0}(\omega_{\delta})\},1)$. Since $0\leq H\leq1$ and $\delta\leq 1$, we have $H^{1-\delta}\leq 1$. So
\begin{align*}
  \mu(E_{r}(\xi)) & = \int_{\varphi^{-1}(E_{r}(\xi))} H(z) \omega(z){\rm d} A(z)\\
   & \leq \varepsilon^{\delta}\int_{\varphi^{-1}(E_{r}(\xi))} \bigg(\frac{1-|\varphi(z)|^{2}}{1-|z|^{2}}\bigg)^{\delta}\omega(z) {\rm d} A(z).
\end{align*}
For $z\in \varphi^{-1}(E_{r}(\xi))$, it follows from \eqref{eq: kernel-estimate} that $1-|\varphi(z)|^{2}\approx1-|\xi|^{2}$. 
Let $$\omega_{\delta}(z)=\frac{\omega(z)}{(1-|z|)^{\delta}}.$$ 
It follows that
\begin{align*}
  \mu(E_{r}(\xi)) & \lesssim \varepsilon^{\delta} (1-|\xi|^{2})^{\delta} \int_{\varphi^{-1}(E_{r}(\xi))}\frac{\omega(z)}{(1-|z|^{2})^{\delta}}  {\rm d} A(z)\\
   & \thickapprox\varepsilon^{\delta} (1-|\xi|^{2})^{\delta}\int_{\varphi^{-1}(E_{r}(\xi))} \omega_{\delta}(z) {\rm d} A(z)\\
   & = \varepsilon^{\delta} (1-|\xi|^{2})^{\delta} \mu_{\varphi, \omega_{\delta}}(E_{r}(\xi))
\end{align*}
where the constat depend only on $r$ and $\delta$.
Since $0<\delta<\alpha$. By Lemma \ref{le:modified doubling weight}, we get $\omega_{\delta}\in \mathcal{D}$.
Therefore, $C_{\varphi}$ is bounded on $A^{p}(\omega_\delta)$ and $\mu_{\varphi, \omega_{\delta}}$  is a Carleson measure for $A^{p}(\omega_\delta)$. By Remark 2, $\mu_{\varphi, \omega_{\delta}}$  is also a Carleson measure for $A_{\omega_\delta}^{\Psi}(\mathbb D)$. Since $r>r_{0}(\omega_{\delta})$, Lemma \ref{le:Carleson measure} and \eqref{eq: kernel-estimate} yield
\begin{align*}
  \mu(E_{r}(\xi)) & \lesssim \varepsilon^{\delta} (1-|\xi|^{2})^{\delta}\omega_{\delta}(E_{r}(\xi))\\
   & =\varepsilon^{\delta} (1-|\xi|^{2})^{\delta}\int_{E_{r}(\xi)} \omega_{\delta}(z) {\rm d} A(z)\\
   & = \varepsilon^{\delta} (1-|\xi|^{2})^{\delta} \int_{E_{r}(\xi)} \frac{\omega(z)}{(1-|z|)^{\delta}} {\rm d} A(z)\\
   & \thickapprox \varepsilon^{\delta} {\omega(E_{r}(\xi))}.
\end{align*}
Note that $r>r_{0}(\omega)$. Combining this with the subharmonicity of $\Psi(|f|)$ and Fubini's Theorem, Lemma \ref{le:weight_norm_equiv}, we have
\begin{align*}
  \int_{\mathbb{D}}  \Psi(|f(z)|) {\rm d} \mu(z) & \lesssim  \int_{\mathbb{D}}  \frac{1}{\omega(E_{r}(z))} \int_{E_{r}(z)}   \Psi(|f(\xi)|) \widetilde{\omega}(\xi) {\rm d} A(\xi)  {\rm d} \mu(z)\\
   & =\int_{\mathbb{D}}  \int_{E_{r}(\xi)}  \frac{1}{\omega(E_{r}(z))} {\rm d} \mu(z)  \Psi(|f(\xi)|) \widetilde{\omega}(\xi) {\rm d} A(\xi) \\
   & \approx\int_{\mathbb{D}}  \int_{E_{r}(\xi)}  \frac{1}{\omega(E_{r}(\xi))} {\rm d} \mu(z)  \Psi(|f(\xi)|) \widetilde{\omega}(\xi) {\rm d} A(\xi) \\
   & =\int_{\mathbb{D}}  \frac{\mu(E_{r}(\xi))}{\omega(E_{r}(\xi))} \Psi(|f(\xi)|) \widetilde{\omega}(\xi) {\rm d} A(\xi) \\
   & \lesssim  \varepsilon^{\delta} \int_{\mathbb{D}} \Psi(|f(\xi)|) \omega(\xi) {\rm d} A(\xi).
\end{align*}
Hence, 
 $$\int _ { \mathbb { D } } \Psi(| f \circ \varphi |)  H \omega {\rm d} A  \leq C \varepsilon^{\delta} \int _ { \mathbb { D } } \Psi(| f |) \omega {\rm d} A  ,$$
for some constat $C=C(\omega,r,\Psi)>0$.
\end{proof}

Moreover, we shall need the following key estimate.
\begin{lem}\label{le: symbols-difference}
Let $\Psi$ be an Orlicz function satisfying the $\Delta_{2}$-condition and $\omega\in \mathcal{D}$. Let $\varphi, \psi \in S(\mathbb{D})$ and $K \subset \mathbb{D}$ is a Borel set. If
$$\sup _ { z \in K } M _ { \varphi , \psi } (z) \leq \varepsilon  \,\,\, for\,\, some \,\,\, \varepsilon > 0, $$
then there exists a constant $h(\varepsilon) > 0$, such that $\lim\limits _ { \varepsilon\to 0 } h ( \varepsilon ) = 0$   and
$$\int _ { K }\Psi(| f \circ \varphi - f \circ \psi | )\omega {\rm d} A  \leq h ( \varepsilon) \int _ { \mathbb{D}}\Psi(| f |) \omega{\rm d} A $$
for any $f\in A^{\Psi}_{\omega}(\mathbb{D})$.
\end{lem}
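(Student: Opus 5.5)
We follow the Moorhouse / Choe--Koo--Wang scheme, adapted through Lemma \ref{le: difference estimate} and Lemma \ref{le: Orlicz-Carleson}. Fix $0<r<1$, taken large enough for Lemma \ref{le: Orlicz-Carleson}, and a smaller radius $r_1<r$ with $r_1<r_2<1$ for Lemma \ref{le: difference estimate}. Split $K=K_1\cup K_2$, where
$$K_1=\{z\in K:\rho_{\varphi,\psi}(z)<r_1\},\qquad K_2=K\setminus K_1 .$$
The idea is that on $K_2$ the points are far apart, so $M_{\varphi,\psi}\le\varepsilon$ forces both $R_\varphi$ and $R_\psi$ to be small. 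On $K_1$ the points are close, so the difference quotient is controlled by $\rho$ times a local average of $f$.

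On $K_2$ we have $R_\varphi+R_\psi\le \varepsilon/r_1$. By convexity,
$$\Psi(|f\circ\varphi-f\circ\psi|)\le \tfrac12\Psi(2|f\circ\varphi|)+\tfrac12\Psi(2|f\circ\psi|),$$
and the $\Delta_2$-condition then gives
$$\Psi(|f\circ\varphi-f\circ\psi|)\lesssim \Psi(|f\circ\varphi|)+\Psi(|f\circ\psi|).$$
Apply Lemma \ref{le: Orlicz-Carleson} twice, with $H=\chi_{K_2}$ and the pair $(\varphi,\varepsilon/r_1)$, respectively $(\psi,\varepsilon/r_1)$. This yields a bound $C(\varepsilon/r_1)^{\delta}\int_{\mathbb D}\Psi(|f|)\omega\,dA$.

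On $K_1$, set $a=\varphi(z)$. Then $\psi(z)\in E_{r_1}(a)$, and the unintegrated form \eqref{le: difference founction estimate} from the proof of Lemma \ref{le: difference estimate} applies. For this step we need $\rho\le 1$, $\Psi$ convex with $\Psi(0)=0$ (so $\Psi(\rho t)\le\rho\Psi(t)$), and $\Delta_2$ to absorb constants. These give
$$\Psi(|f\circ\varphi-f\circ\psi|)(z)\lesssim \rho_{\varphi,\psi}(z)\,\frac{1}{\omega(S(\varphi(z)))}\int_{E_{r_2}(\varphi(z))}\Psi(|f|)\widetilde\omega\,dA .$$
Multiply by $\omega$, integrate over $K_1$, and use Fubini. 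The result is
$$\int_{\mathbb D}\Psi(|f(\xi)|)\widetilde\omega(\xi)\,\frac{\lambda(E_{r_2}(\xi))}{\omega(S(\xi))}\,dA(\xi),$$
where $\lambda(E)=\int_{\varphi^{-1}(E)\cap K_1}\rho_{\varphi,\psi}\,\omega\,dA$. Here we used that $\xi\in E_{r_2}(\varphi(z))$ if and only if $\varphi(z)\in E_{r_2}(\xi)$, and that $\omega(S(\varphi(z)))\approx\omega(S(\xi))$ there.

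It remains to show $\lambda(E_{r_2}(\xi))\lesssim h(\varepsilon)\,\omega(S(\xi))$; Lemma \ref{le:weight_norm_equiv} then converts $\widetilde\omega$ back to $\omega$ and finishes the proof. This is the main obstacle, because on $K_1$ the factor $R_\varphi$ need not be small, only the product $\rho(R_\varphi+R_\psi)\le\varepsilon$. We handle it by a second splitting with a parameter $t=\varepsilon^{1/2}$.

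Where $\rho_{\varphi,\psi}\le t$, we bound $\lambda$ by $t$ times the pullback measure $\mu_{\varphi,\omega}$. That measure is Carleson by boundedness of $C_\varphi$ on $A^p(\omega)$ together with Lemma \ref{le:Carleson measure}, so this piece is at most $t\,\omega(E_{r_2}(\xi))\approx t\,\omega(S(\xi))$.

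Where $\rho_{\varphi,\psi}> t$, we have $R_\varphi\le\varepsilon/t=t$. Using $\rho\le1$, we estimate this piece exactly as in the proof of Lemma \ref{le: Orlicz-Carleson}, with the weight $\omega_\delta$, and get at most $Ct^{\delta}\omega(E_{r_2}(\xi))$. Altogether $h(\varepsilon)\approx\varepsilon^{\delta/2}+\varepsilon^{1/2}$, which tends to $0$. The radii must be chosen so that $r_2$ exceeds $r_0(\omega)$ and $r_0(\omega_\delta)$, making Lemma \ref{le:Carleson measure} applicable.
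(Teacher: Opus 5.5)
Your proposal is correct and follows essentially the same route as the paper: split $K$ according to the size of $\rho_{\varphi,\psi}$, apply Lemma \ref{le: Orlicz-Carleson} on the far part, and on the near part use the unintegrated difference estimate, Fubini and the Carleson property of $\mu_{\varphi,\omega}$. The one difference is that the paper takes the splitting threshold $\tau\approx\sqrt{\varepsilon}$ from the outset, so $\rho_{\varphi,\psi}<\tau$ is already small on the near set; this makes your second splitting of $\lambda$, where you re-run the $\omega_\delta$ computation, unnecessary. That is why your ``main obstacle'' never arises in the paper.
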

\begin{proof}
Assume that $\sup\limits _ { z \in K } M _ { \varphi , \psi } (z) =\sup\limits  _ { z \in K }(R_{\varphi}(z)+R_{\psi}(z))\rho_{\varphi,\psi}(z)\leq \varepsilon$. For $\tau = \tau(\varepsilon )\in (0, \frac{1}{2})$ to be chosen later, and set
$$\Omega _ { \tau } = \left\{ z \in K : \rho_{\varphi,\psi}(z) < \tau \right\},  \,\,\,\, \Omega _ { \tau} ^{\prime}  = K \setminus \Omega _ { \tau } .$$
For any $f\in A^{\Psi}_{\omega}(\mathbb{D})$, write
\begin{align*}
  \int _ { K } \Psi(| f \circ \varphi(z) - f \circ \psi(z) | ) \omega(z){\rm d} A(z)  & =I_{1}(f)+I_{2}(f),
\end{align*}
where $$I_{1}(f)=\int _ { \Omega _ { \tau } }\Psi(| f \circ \varphi(z) - f \circ \psi(z) | ) \omega(z){\rm d} A(z),  \quad I_{2}(f)=\int _ { \Omega _ { \tau} ^{\prime} }\Psi(| f \circ \varphi(z) - f \circ \psi (z)| )\omega(z) {\rm d} A(z). $$

We first estimate $I_{2}(f)$. For $z\in\Omega _ { \tau } ^{\prime}$, we have $\tau\chi_{\Omega_{\tau}^{\prime}}(z)\leq\rho_{\varphi,\psi}(z)$.
Hence
$$\chi _ { \Omega _ { \tau } ^{\prime} } (z) ( R_{\varphi}(z)+R_{\psi}(z)) \leq \frac{1}{\tau}( R_{\varphi}(z)+R_{\psi}(z))\rho_{\varphi,\psi}(z)\leq \frac { \varepsilon} { \tau }.$$
Let $H(z)=\chi _ { \Omega _ { \tau } ^{\prime} } (z)$. We have
 $$\sup_{z\in\mathbb{D}}H(z)R_{\varphi}(z)\leq\sup_{z\in\mathbb{D}}H(z)( R_{\varphi}(z)+R_{\psi}(z)) \leq \frac { \varepsilon} { \tau }.$$
Similarly, $$\sup_{z\in\mathbb{D}}H(z)R_{\psi}(z) \leq \frac { \varepsilon} { \tau }.$$
Applying  Lemma \ref{le: Orlicz-Carleson}, the convexity of $\Psi$ and $\Delta_{2}$-condition, we have
\begin{align}\label{eq: II-f}
  I_{2}(f) &=\int _ { \mathbb{D} }\Psi(| f \circ \varphi(z) - f \circ \psi(z) | )\chi _ { \Omega _ { \tau } ^{\prime} } (z)\omega(z){\rm d} A (z)  \nonumber\\
   &\leq \int _ { \mathbb{D} }\Psi(| f \circ \varphi(z) | +| f \circ \psi(z) | ) H (z)\omega(z){\rm d} A (z)  \nonumber\\
   & \lesssim\int _ {  \mathbb{D}}\Psi(| f \circ \varphi (z)| )H (z)\omega(z){\rm d} A(z) +\int _ {  \mathbb{D}} \Psi(| f \circ \psi(z) | )H(z)\omega(z) {\rm d} A(z)  \nonumber\\
   & \lesssim\left( \frac {\varepsilon } { \tau} \right) ^{\delta} \int _ { \mathbb { D } } \Psi(| f(\xi) |) \omega(\xi) {\rm d} A(\xi) .
\end{align}

We next estimate $I_{1}(f)$.
For $z\in\Omega _ { \tau}$, \eqref{le: difference founction estimate} with $r_1 = 1/2$ and $r_2 = 2/3$ gives
\begin{align*}
 I_{1}(f) & \lesssim \int _ { \Omega _ {\tau } }\frac { \rho_{\varphi,\psi}(z) } {\omega(S ( \varphi(z) ))} \int _ { E _ { 2 / 3} ( \varphi(z) ) } \Psi(| f(\xi) | )\widetilde{\omega}(\xi){\rm d} A (\xi) \omega(z){\rm d} A (z)\\
   & < \tau\int _ { \mathbb{D} }\frac { \chi_{ \Omega _ { \tau}}(z) } {\omega(S( \varphi(z) ))} \int _ {\mathbb{D}}\chi_ { E _ { 2 / 3} ( \varphi(z) ) } (\xi)\Psi(| f(\xi) | )\widetilde{\omega}(\xi){\rm d} A (\xi) \omega(z){\rm d} A (z).
\end{align*}
For any $\xi\in E_{2/3}(\varphi(z))$, combining \eqref{eq: kernel-estimate} with Lemmas \ref{le: doubling weight property} and \ref{le:reverse doubling weight property}, we obtain
$$\omega(S( \varphi(z) ))\thickapprox\widehat{\omega}( \varphi(z))(1-| \varphi(z)|)\thickapprox\widehat{\omega}( \xi)(1-| \xi|)\thickapprox\omega(S (\xi ))$$
Since $\omega\in \mathcal{D}$, composition operator $C_{\varphi}$ is bounded on $A^{p}(\omega)$. So, $\mu_{\varphi,\omega}$ is a Carleson measure for $A^{p}(\omega)$ and hence also for $A_\omega^\Psi(\mathbb{D})$ by Remark 2. Choose a fixed $r>\max\{r_{0}(\omega),\frac{2}{3}\}$, applying Lemma \ref{le:Carleson measure}, we have
$$\mu_{\varphi,\omega}( E _ { 2 / 3} (\xi))\leq\mu_{\varphi,\omega}( E _ {r} (\xi))\lesssim \omega(E _ {r} (\xi))\thickapprox\omega(S(\xi)).$$
That is
$$\sup_{\xi\in\mathbb{D}} \frac{\mu_{\varphi,\omega}( E _ { 2 / 3} (\xi))}{\omega(S(\xi))}<\infty.$$
It follows from Fubini's theorem and Lemma \ref{le:weight_norm_equiv} that 
\begin{align}\label{eq: I-f}
   I_{1}(f) & \lesssim \tau\int _ { \mathbb{D} }\frac { \chi_{ \Omega _ { \tau }}(z)  } {\omega(S (\xi ))} \int _ {\mathbb{D}}\chi_ { \varphi^{-1}( E _ { 2 / 3} (\xi))} (z)\Psi(| f(\xi) | )\widetilde{\omega}(\xi){\rm d} A (\xi) \omega(z){\rm d} A (z)   \nonumber\\
   & =\tau\int _ { \mathbb{D} }\frac {\Psi(| f(\xi) | )} {\omega(S ( \xi))} \int _ { \Omega _ { \tau}\cap\varphi^{-1}( E _ { 2 / 3} (\xi))} \omega(z){\rm d} A (z)\widetilde{\omega}(\xi){\rm d} A (\xi)     \nonumber\\
   &\leq\tau\int _ { \mathbb{D} }\Psi(| f(\xi)| )  \frac{\mu_{\varphi,\omega}( E _ { 2 / 3}(\xi))}{\omega(  S(\xi))}\widetilde{\omega}(\xi){\rm d} A (\xi)      \nonumber\\
   &\leq\tau\sup_{\xi\in\mathbb{D}} \frac{\mu_{\varphi,\omega}( E _ { 2 / 3}(\xi))}{\omega( S (\xi))}\int _ { \mathbb{D} }\Psi(| f(\xi) | ) \widetilde{\omega}(\xi){\rm d} A (\xi)      \nonumber\\
   &\thickapprox\tau\sup_{\xi\in\mathbb{D}} \frac{\mu_{\varphi,\omega}( E _ { 2 / 3}(\xi))}{\omega( S (\xi))}\int _ { \mathbb{D} }\Psi(| f(\xi) | ) \omega(\xi){\rm d} A (\xi) \nonumber\\
   &\lesssim\tau\int _ { \mathbb{D} }\Psi(| f(\xi) | ) \omega(\xi){\rm d} A (\xi).
\end{align}
Combining \eqref{eq: II-f} and \eqref{eq: I-f}, we have
\begin{align*}
 \int _ { K } \Psi(| f \circ \varphi(z) - f \circ \psi(z) | )\omega(z) {\rm d} A(z)  &\leq C [( \frac {\varepsilon } { \tau } ) ^{\delta}+\tau]\int _ { \mathbb { D } } \Psi(| f(\xi) |) \omega(\xi) {\rm d} A(\xi) .
\end{align*}
Taking $\tau=\frac{\sqrt{\varepsilon}}{1+2\sqrt{\varepsilon}}$, $h(\varepsilon)=C [( \frac {\varepsilon } { \tau} ) ^{\delta}+\tau]$, we have  $h(\varepsilon)\rightarrow0$ as $\varepsilon\rightarrow0$, and
$$\int _ { K } \Psi(| f \circ \varphi(z) - f \circ \psi(z) | )\omega(z) {\rm d} A(z) \leq h(\varepsilon)\int _ { \mathbb { D } }  \Psi(| f(\xi) |) \omega(\xi) {\rm d} A(\xi) . $$
\end{proof}

To prove the main theorem, we introduce some notations. Let
\begin{align}\label{eq: T-J}
T _ { J }  &= \sum _ { i \in J } a _ { i } T _ { i }
\end{align}
for $J \subset \Lambda_N$. In view of \eqref{eq: operator-T}, the operators  $T_J$ can be rewritten as
\begin{align}\label{eq: operators-T}
 T _ { J } = \sum _ { i , k \in J } c _ { i , k } T _ { i , k } + \left( \sum _ { i \in J } a _ { i } \right) T _ { \ell },
\end{align}
where $\ell\in J$.

Since $\Psi$ is an Orlicz function satisfying the $\Delta_{2}$-condition, there exists a constant $C=C(N)>0$ such that
\begin{align}\label{eq: subadditivity}
  \Psi(\sum_{i=1}^{N} t_{i}) &\leq C \sum_{i=1}^{N}\Psi( t_{i})
\end{align}
for all $t_{i}>0,i\in\Lambda_N$.

\vskip.2cm

\textbf{Proof of Theorem 1.1}.
First, proof of ${\rm (a)}\Rightarrow {\rm (b)}$.
Assume that $T$ is compact on $A^{\Psi}_{\omega}(\mathbb{D}) $. For any $z\in\mathbb{D}$, let 
$$f_{z,\gamma}(\xi)=\bigg(\frac{1-|z|^{2}}{1-\bar{z}\xi}\bigg)^{\gamma+1}\Psi^{-1}\bigg(\frac{1}{\omega(S(z))}\bigg). $$
By Lemma \ref{le: text-function}, for all $\gamma\geq\gamma_{0}$, we have $\|f_{z}\|_{A^{\Psi}_{\omega}(\mathbb{D})}\lesssim1$. Moreover, $f_{z,\gamma}\to0$ uniformly on compact subsets of $\mathbb{D}$ as $|z|\to1$. Hence, Lemma \ref{le:compactness} implies that $$\lim_{|z|\to1}\|Tf_{z,\gamma}\|_{A^{\Psi}_{\omega}(\mathbb{D})}=0.$$

Next, proof of ${\rm (b)}  \Rightarrow {\rm (c)}$.  Assume that ${\rm (b)}$ holds. We suppose that ${\rm (c)}$ fails and will derive a contradiction.

Since ${\rm (c)}$ fails, there exists a sequence $\{z_n\} \subset \mathbb{D}$ such that $|z_n| \to 1$ and
$$\inf _ { n } Q ( z _ { n } ) > 0 .$$
Note that the functions $Q_{\eta}$ are bounded on $\mathbb{D}$ and $Q ( z _ { n } ) =\prod_{\eta \in \mathscr{ P } _ { N }}Q_{\eta}(z_n)$, we obtain
\begin{align}\label{eq: inf-Q}
\inf _ { n } Q _ { \eta } ( z _ { n } ) > 0 ,   \,\,\,\,\,\,\,\,\eta \in \mathscr{ P } _ { N }.
\end{align}
Furthermore, we suppose
$\{ R _ { i } ( z _ { n } ) \} $ and $ \{ \rho _ { i , k } ( z _ { n } ) \} $  both converge as $n \to \infty$, for every $i,k\in\Lambda_{N}$. Let
\begin{align}\label{eq: define-F}
F = \left\{ i \in \Lambda _ { N } :\lim _ { n \to \infty } R _ { i } ( z _ { n } ) = 0 \right\}
\end{align}
and
$$F ^{\prime}  = \Lambda _ { N } \setminus F.$$
Assume that
\begin{align}\label{eq: inf-R}
\inf_ { n } R _ { i } ( z _ { n } ) > 0
\end{align}
for each $i \in F'$.

We interrupt the proof to show that the set $F'$ satisfies the following claim.

{\bf Claim}: There exist pairwise disjoint sets of indices $J_1, \ldots, J_L$ and a subsequence of $\{z_n\}$ satisfies the following properties:

{\rm (i)} $F'=\bigcup\limits_{m=1}^{L}J_{m}$;

{\rm (ii)} For each $m=1,\ldots,L $, there exists $k_m\in J_m $ such that for all $n $ and all $i \in \bigcup\limits _ { \ell = m } ^{L} J _ { \ell } $,
$$| \varphi _ { i } ( z _ { n } ) | \leq | \varphi _ { k _ { m } } ( z _ { n } ) |; $$

{\rm (iii)} For each $m=1, \ldots, L$, $\lim\limits _{n \rightarrow \infty} \rho_{i, k}\left(z_{n}\right)=0, \qquad i, k \in J_{m}$ ;

{\rm (iv)} $\sum\limits_{i \in J_{m}} a_{i}=0$ for all $m=1, \ldots, L$.

Proof of the Claim. The proofs of {\rm (i)}-{\rm (iii)} can be found in \cite{CKW}. It remains only to prove {\rm (iv)}. We first consider the case $m=1$.   Let
$$f_{a}(u)=\bigg(\frac{1-|a|^{2}}{1-\bar{a}u}\bigg)^{\gamma+1}\Psi^{-1}\bigg(\frac{1}{\omega(S(a))}\bigg),  \,\,\,\,\, a\in \mathbb{D}. $$
Set $$\xi _ { n } = \varphi _ { k _ { 1 } } ( z _ { n } )$$
for each $n$.  By \eqref{eq: inf-R}, we have $1 - |\xi_n|^2 \approx 1 - |z_n|^2$. Hence, $|\xi_n| \to 1$. 
 (b) and Lemma \ref{le: text-function} yield
$$\lim _ { n \to \infty } \| T f _ { \xi _ { n } } \| _ { A ^{\Psi}_{\omega}(\mathbb{D}) }=0.$$
Moreover, combining \eqref{eq:carleson-square-weight} with Lemma \ref{le: doubling weight property} and \ref{le:reverse doubling weight property}, we obtain
$$\omega(S(z_{n}))\thickapprox\widehat{\omega}(z_{n})(1-|z_{n}|)\thickapprox\widehat{\omega}(\xi_{n})(1-|\xi_{n}|)\thickapprox\omega(S(\xi_{n})) $$
 Applying Lemma \ref{le: pointwise estimate}, we have
\begin{align*}
| Tf _ { \xi _ { n } } ( z_{n} ) |&\lesssim \Psi^{-1}\left(\frac { 1 } { \omega(S(z_{n})) }\right) \| T f _ { \xi _ { n } } \| _ { A ^{\Psi}_{\omega}(\mathbb{D}) } \\
                                  &\thickapprox\Psi^{-1}\left(\frac { 1 } { \omega(S(\xi_{n})) }\right) \| T f _ { \xi _ { n } } \| _ { A ^{\Psi}_{\omega}(\mathbb{D}) }.
\end{align*}
Hence
\begin{align}\label{eq: lim-T}
\lim _ { n \to \infty } \frac{| Tf _ { \xi _ { n } } ( z_{n} ) |}{\Psi^{-1}\left(\frac { 1 } { \omega(S(\xi_{n})) }\right)}= 0.
\end{align}

For $i \in J _ { 1 }$, Claim {\rm (iii)} implies that $\lim \limits_{n \rightarrow \infty} \rho_{i, k_{1}}\left(z_{n}\right)=0$.
It follows from \eqref{eq: ratio-estimate} that
$$1 - | \xi _ { n } |= 1- |\varphi_{k_{1}}(z_{n}) | \approx1 - | \varphi _ { i } ( z _ { n } ) | .$$
For $i,k \in J _ { 1 }$. By Lemma \ref{le: difference estimate}, \ref{le: doubling weight property} and \ref{le:reverse doubling weight property}, we have
\begin{align*}
|T _ { i,k  }f _ { \xi _ { n } } ( z _ { n } ) |
     & \lesssim \Psi^{-1}\left(\frac{1}{\omega(S ( \varphi_ {i }( z _ { n }) ))} \right)\rho_{i,k} (z _ { n })\|  f _ { \xi _ { n } } \| _ { A ^{\Psi}_{\omega}(\mathbb{D}) } \\
    & \thickapprox\Psi^{-1}\left(\frac{1}{\omega(S ( \xi _ { n } ))} \right)\rho_{i,k} (z _ { n })\|  f _ { \xi _ { n } } \| _ { A ^{\Psi}_{\omega}(\mathbb{D}) }.
\end{align*}
Since $\|  f _ { \xi _ { n } } \| _ { A ^{\Psi}_{\omega}(\mathbb{D}) } \lesssim 1$, Claim {\rm (iii)} yields
\begin{align}\label{eq: T-ik}
 \frac{|T _ { i,k  }f _ { \xi _ { n } } ( z _ { n } ) |}{\Psi^{-1}\left(\frac{1}{\omega(S ( \xi _ { n } ))} \right)} & \lesssim \rho_{i,k} (z _ { n })\to 0.
\end{align}

For $i\in \Lambda_{N}$, we get
\begin{align}\label{eq: T-i}
    \frac{| T _ { i } f _ { \xi _ { n } } ( z _ { n } ) |}{\Psi^{-1}\left(\frac{1}{\omega(S(\xi _ { n }))}\right)}
   & =\frac{1}{\Psi^{-1}\left(\frac{1}{\omega(S(\xi _ { n }))}\right)} \bigg(\frac{1-| \xi _ { n } | ^{2}}{ | 1 - \overline{\xi _ { n }}\varphi_ {i }( z _ { n }) |}\bigg)^{\gamma+1}\Psi^{-1}\left(\frac{1}{ \omega(S(\xi _ { n }))}\right)   \nonumber\\
   & =\bigg(\frac{1-| \xi _ { n } | ^{2}}{ | 1 - \overline{\xi _ { n }}\varphi_ {i }( z _ { n }) |}\bigg)^{\gamma+1}.
\end{align}
In particular, when $i=k_{1}$, we have
 $$ \frac{| T _ { k_{1} } f _ { \xi _ { n } } ( z _ { n } ) |}{\Psi^{-1}\left(\frac{1}{\omega(S(\xi _ { n }))}\right)}  =1.$$ 
Taking $J=J_{1}$ and $\ell=k_{1}$ in \eqref{eq: operators-T}. Combining \eqref{eq: T-ik}, we obtain
\begin{align}\label{eq: T-J1}
   \lim_{n\to\infty}\frac{|T _ { J_{1} }  f _ { \xi _ { n } } ( z _ { n } )|}{\Psi^{-1}\left(\frac{1}{\omega(S(\xi _ { n }))}\right)} & =\big| \sum _ { i \in J_{1} } a _ { i } \big|.
\end{align}

For $i\in F$, observe that $| 1 -\overline { { \xi _ { n } } } \varphi _ { i } ( z _ { n } ) |\geq 1-|\varphi _ { i } ( z _ { n } )|\thickapprox1 -  |\varphi _ { i } ( z _ { n } ) |^{2}$.
Thus \eqref{eq: define-F} and \eqref{eq: T-i} imply that
\begin{align*}
\frac{| T _ { i } f _ { \xi _ { n } } ( z _ { n } ) |}{\Psi^{-1}\left(\frac{1}{\omega(S(\xi _ { n }))}\right)}
  & \lesssim\left( \frac { 1 - | \xi _ { n } | ^{2} } { 1 -  |\varphi _ { i } ( z _ { n } ) |^{2}  } \right) ^{\gamma+1} \\
   & \approx\left( \frac { 1 - | z _ { n } | ^{2} } { 1 -  |\varphi _ { i } ( z _ { n } ) |^{2}  } \right) ^{\gamma+1} = R _ {i } ^{\gamma+1} ( z _ { n } ) \rightarrow  0.
\end{align*}

Combining this with \eqref{eq: T-J}, we obtain
\begin{align*}
\frac{| T _ { F } f _ { \xi _ { n } } ( z _ { n } ) |}{\Psi^{-1}\left(\frac{1}{\omega(S(\xi _ { n }))}\right)} & =\frac{|  \sum _ { i \in F } a _ { i } T _ { i } f _ { \xi _ { n } } ( z _ { n } ) |}{\Psi^{-1}\left(\frac{1}{\omega(S(\xi _ { n }))}\right)}\\
     & \leq  \sum _ { i \in F } |a _ { i }|\frac{| T _ { i } f _ { \xi _ { n } } ( z _ { n } ) |}{\Psi^{-1}\left(\frac{1}{\omega(S(\xi _ { n }))}\right)}.
\end{align*}
Hence
\begin{align}\label{eq: T-F}
\lim_{n\to\infty}\frac{| T _ { F } f _ { \xi _ { n } } ( z _ { n } ) |}{\Psi^{-1}\left(\frac{1}{\omega(S(\xi _ { n }))}\right)}= 0 .
\end{align}

Next, we consider the operators $T_{J_k}$ for $1 < k \leq L$. By Claim (ii), $|\varphi_i(z_n)| \leq |\xi_n|, \,\,i \in J_k$. It follows from \eqref{eq: distance} that
\begin{align*}
  \left( \frac { 1 - | \xi _ { n } | ^{2} } { | 1 - \varphi _ { i } ( z _ { n } ) \overline { { \xi _ { n } } } | } \right) ^{2} & \leq \frac { ( 1 - | \xi _ { n } | ^{2} ) ( 1 - | \varphi _ { i } ( z _ { n } ) | ^{2} ) } { | 1 - \varphi _ { i } ( z _ { n } ) \overline { { \xi _ { n } } } | ^{2} }  \\
   & =\frac { ( 1 - | \varphi _ { k_{1} } ( z _ { n } ) | ^{2} ) ( 1 - | \varphi _ { i } ( z _ { n } ) | ^{2} ) } { | 1 - \varphi _ { i } ( z _ { n } ) \overline { {\varphi _ { k_{1}} ( z _ { n } ) } } | ^{2} }= 1 - \rho _ { k_{1} , i } ^{2} ( z _ { n } ).
\end{align*}
 From \eqref{eq: T-i}, we obtain
\begin{align*}
     \frac{| T _ { i } f _ { \xi _ { n } } ( z _ { n } ) |}{\Psi^{-1}\left(\frac{1}{\omega(S(\xi _ { n }))}\right)}
   & \leq ( 1 - \rho _ {k_{1} , i } ^{2} ( z _ { n } ) )^{\frac{\gamma+1}{2}}
\end{align*}
for $i \in J_k$.

Let $$\eta _ { 1 }  = \min\left[ \lim _ { n \to \infty } \rho _ { k _ { 1 } , i } ( z _ { n } ) \right] > 0,$$
where the minimum is taken over all $i \in \cup_{1 < k \leq L} J_k$.
 By \eqref{eq: T-J}, we have
\begin{align}\label{eq: T-Jk}
  \limsup_{n\rightarrow\infty}\sum_{1 < k \leq L} \frac{| T _ { J _ { k } } f _ {\xi_ { n } } ( z _ { n } ) | }{\Psi^{-1}\left(\frac{1}{\omega(S(\xi _ { n }))}\right)} & \leq \limsup_{n\rightarrow\infty} \sum_{1 < k \leq L}\sum_{i\in J_k} |a_{i}|\frac{| T _ { i } f _ { \xi _ { n } } ( z _ { n } ) |}{\Psi^{-1}\left(\frac{1}{\omega(S(\xi _ { n }))}\right)} \nonumber\\
   &  \leq \limsup_{n\rightarrow\infty} \sum_{1 < k \leq L}\sum_{i\in J_k} |a_{i}|( 1 - \rho _ {k_{1} , i } ^{2} ( z _ { n } ) )^{\frac{\gamma+1}{2}} \nonumber\\
   & \leq N \sum_{i=1}^{N} |a_{i}|( 1 -  \eta _ { 1 }^{2} ( z _ { n } ) )^{\frac{\gamma+1}{2}} \nonumber\\
    & =C( 1 -  \eta _ { 1 }^{2} ( z _ { n } ) )^{\frac{\gamma+1}{2}} 
\end{align}
 for some constat $C=C(N,a_{1},\cdot\cdot\cdot,a_{N})>0$.

Applying \eqref{eq: T-J} and Claim {\rm (i)}, we obtain  $$T _ { J _ { 1 } } = T - T _ { F } - \sum _ { 1 < k \leq L } T _ { J _ { k } } .$$
Moreover,
\begin{align*}
  \frac{| T _ { J _ { 1 } } f _ { \xi _ { n } } ( z _ { n } ) | }{\Psi^{-1}\left(\frac{1}{\omega(S(\xi _ { n }))}\right)}& =\frac{|  ( T - T _ { F } - \sum _ { 1 < k \leq L } T _ { J _ { k } } ) f _ { \xi _ { n } } ( z _ { n } )|}{\Psi^{-1}\left(\frac{1}{\omega(S(\xi _ { n }))}\right)} \\
   & \leq  \frac{| T f _ { \xi _ { n } } ( z _ { n } )|}{\Psi^{-1}\left(\frac{1}{\omega(S(\xi _ { n }))}\right)}+ \frac{| T _ { F }f _ { \xi _ { n } } ( z _ { n } )|}{\Psi^{-1}\left(\frac{1}{\omega(S(\xi _ { n }))}\right)}+ \sum _ { 1 < k \leq L }\frac{| T _ { J _ { k } }f _ { \xi _ { n } } ( z _ { n } )|}{\Psi^{-1}\left(\frac{1}{\omega(S(\xi _ { n }))}\right)}.
\end{align*}

Combinin \eqref{eq: lim-T}, \eqref{eq: T-J1}, \eqref{eq: T-F}, and \eqref{eq: T-Jk}, we conclude that
$$\big| \sum _ { i \in J_{1} } a _ { i } \big|\leq C( 1 - \eta _ { 1 } ^{2} )^{\frac{\gamma+1}{2}} .$$
Since $\gamma>0$ is arbitrary. Taking the limit $\gamma\to \infty$, we arrive at
 $$\sum _ { i \in J _ { 1 } } a _ {i } = 0 .$$
Hence,  the proof is complete for the case $m=1$.

Now let $2 \leq m \leq L$ and assume that
$$\sum _ { i \in J _ { 1 } } a _ { i } = \cdots = \sum _ { i \in J _ { m - 1 } } a _ { i } = 0 .$$
It remains to show that
$$\sum _ { i \in J _ { m } } a _ { i } = 0. $$
Since the proof is  analogous to the case $m=1$, we omit the details. This completes the proof of the claim.

Based on the claim, we now return to the proof of ${\rm (b)}  \Rightarrow {\rm (c)}$. One may assume that the sets $J_1,\ldots,J_L,F$ are all nonempty. Let $n_m$ and $d$ denote the number of elements in $J_m$ and $F$, respectively. Let
 $$\mathfrak  { j } _ { m }  = ( i _ { m , 1 } , \ldots , i _ { m , n _ { m } } ) ,  \qquad m = 1 , \ldots , L,$$
where $i_{m,1},\ldots,i_{m,n_m}$ are the distinct elements of $J_m$. Let $$\mathfrak { f }  = ( i _ { 1 } , \ldots , i _ { d } ),$$
where $i_{1},\ldots,i_{d}$ are the distinct elements of $F$. Constructing  $$\sigma = ( \mathfrak  { j } _ { 1 } , \ldots , \mathfrak { j } _ { L } , \mathfrak { f } )\in \mathscr{P}_N,$$
by \eqref{eq: define-Q}, we conclude that
 \begin{align*}
    Q _ { \sigma }  & = \sum _ { i = 1 } ^{N - 1} | s _ { i } ^{\sigma} | M _ { \sigma _ {i } ,\sigma _ { i + 1 } } + | s _ { N } ^{\sigma} | R _ { \sigma _ { N } } \\
    & = \bigg(\sum _ { i = 1 } ^{N - d} | s _ { i } ^{\sigma} | M _ { \sigma _ { i } , \sigma _ { i + 1 } }+\sum _ { i = N -d+1 } ^{N - 1} | s _ { i } ^{\sigma} | M _ { \sigma _ { i } , \sigma _ { i + 1 } }\bigg) + | s _ { N } ^{\sigma} | R _ { \sigma _ { N } } \\
    & =\sum _ { i = 1 } ^{N - d} | s _ { i } ^{\sigma} | M _ { \sigma _ { i } , \sigma _ { i + 1 } }+\bigg(\sum _ { i = N -d+1 } ^{N - 1} | s _ { i } ^{\sigma} | M _ { \sigma _ { i } , \sigma_ { i + 1 } } + | s _ { N } ^{\sigma} | R _ { \sigma _ { N } }\bigg).
 \end{align*}
 Applying Claim (iv) with $\sigma$, we deduce that
$$\sum _ { i = 1 } ^{n _ { 1} + \cdots + n _ { m } } a _ { \sigma _ { i } } = 0   \,\,\,\,\,\,\,   m=1,\ldots,L.$$
So
$$\sum _ { i = 1 } ^{N - d} \left| s _ { i } ^{\sigma} \right| M _ { \sigma _ { i } , \sigma _ { i + 1 } } = \sum _ { m = 1 } ^{L} \sum _ { i , k \in J _ { m } } c _ { i , k } ^{( m )} M _ { i , k },$$
where $c_{i,k}^{(m)}$ are nonnegative coefficients depending on $\{a_{\ell}:\ell\in J_{m}\}$.

On the other hand, noting that $M _ { \sigma _ { i } , \sigma _ { i + 1 } }=(R _ {\sigma_ { i } }+R _ { \sigma _ { i +1} })\rho_{\sigma_ { i } , \sigma _ { i + 1 }}$ and $0\leq\rho _ { \sigma _ { i } , \sigma _ { i + 1 } }<1$, so
 $$\sum _ { i = N - d + 1 } ^{N - 1} \left| s _ { i } ^{\sigma} \right| M _ {\sigma _ { i } , \sigma _ { i + 1 } } + \left| s _ { N } ^{\sigma} \right| R _ { \sigma _ { N } } \leq \sum _ { i \in F } c _ { i } R _ { i },$$
where $c_i$ are nonnegative coefficients depending on $\{a_{i_1},\ldots,a_{i_d}\}$.
Hence
$$Q _ { \sigma} (z_n)\leq\sum _ { m = 1 } ^{L} \sum _ { i , k \in J _ { m } } c _ { i , k } ^{( m )} M _ { i , k }(z_n) + \sum _ { i \in F } c _ { i } R _ { i }(z_n).$$
By Claim {\rm (iii)}, for $i,k\in J_{m}$, we have $\lim\limits _{n \rightarrow \infty} \rho_{i, k}(z_{n})=0$. From $M _ { i, k } ( z_{n} )  = [ R _ { i } ( z_{n} ) + R _ { k } ( z_{n} ) ] \rho _ { i , k } ( z_{n}) $, we obtain $\sum\limits _ { m = 1 } ^{L} \sum\limits _ { i , k \in J _ { m } } c _ { i , k } ^{( m )} M _ { i , k }(z_n)\rightarrow 0$.
Meanwhile, by \eqref{eq: define-F}, we conclude that $ \sum\limits _ { i \in F } c _ { i } R _ { i }(z_n)\rightarrow 0$.
So $$\lim _ { n \to \infty } Q _ { \sigma } ( z _ { n } ) = 0 .$$
This contradicts \eqref{eq: inf-Q}. Therefore  $\lim\limits_{|z|\to1}Q(z)=0$. This completes the proof.

Finally, we prove ${\rm (c)} \Rightarrow {\rm (a)}$.
Let $\{f_n\}$ be an arbitrary sequence in $A^{\Psi}_{\omega}(\mathbb{D})$ such that $\sup_{n} \|f_n\|_{A^{\Psi}_{\omega}(\mathbb{D})} \leq 1$ and $f_n\rightarrow0$ uniformly on compact subsets of  $\mathbb{D}$. By Lemma  \ref{le:compactness}, it suffices to show that $\|Tf_n\|_{A^{\Psi}_{\omega}(\mathbb{D})}\rightarrow 0$. Moreover, by \cite[Theorem 9.4]{KR}, we need to prove that $\lim \limits_ { n \to \infty } \int _ { \mathbb{D} }\Psi ( | Tf _ { n }  | ) \omega{\rm d}A = 0 $.

Let $\varepsilon>0$ and $\eta\in\mathscr{P}_N$. Define
$$U_{\eta,\varepsilon}=\left\{z\in\mathbb{D}:Q_{\eta}(z)\leq\varepsilon\right\}.$$
For $\xi\in \mathbb{T}$ and $0<h<1$, let $I(\xi,h)$ denote the open arc of $\mathbb{T}$ centered at $\xi$ with arc length $2\pi h$. The Carleson square
associated with $I(\xi,h)$ is defined by
$$S(\xi,h)=\left\{z\in\mathbb{D}:1-h<|z|<1, \frac{z}{|z|}\in I(\xi,h)\right\}.$$
By the assumption (c), for every $\xi\in \mathbb{T}$, there exists $h_{\xi}=h_{\xi}(\varepsilon)\in(0,1)$
such that
\begin{align}\label{eq:U-cover}
  S(\xi,h_{\xi}) & \subset\bigcup_{\eta\in\mathscr{P}_N}U_{\eta,\varepsilon}.
\end{align}
Otherwise, there exist some $\xi_0\in \mathbb{T}$ such that, for every $h\in(0,1)$, the set $S(\xi_0,h)$ contains a point
$z_{h}\in\mathbb{D}$ satisfying $Q_{\eta}(z_{h})>\varepsilon$ for all $\eta\in\mathscr{P}_N$. Hence, we have 
$Q(z_{h})>\varepsilon^{N!}$, which contradicts (c).

Since $\mathbb{T}$ is compact, there exist finitely many points $\xi_1,\xi_2,\ldots,\xi_{j}\in \mathbb{T}$
such that  $\mathbb{T}=\bigcup_{i=1}^{j}I(\xi_i,h_i)$, where $h_i=h_{\xi_i}$.
 Set $r=\max_{1\leq i\leq j}(1-h_i)$ and  $r \mathbb { D }=\{z\in \mathbb {C }: 0<|z|< r\}$. Then each $ z\in\mathbb { D } \backslash r \mathbb { D }$ belongs to $S(\xi_i,h_i) $ for some $1\leq i\leq j$. Hence
\begin{align*}
  \mathbb{D}\setminus r\mathbb{D} & \subset\bigcup_{i=1}^{j}S(\xi_i,h_i).
\end{align*}
Combining  this with \eqref{eq:U-cover} yield
$$\mathbb{D}\setminus r\mathbb{D}\subset\bigcup_{i=1}^{j}S(\xi_i,h_i)\subset\bigcup_{\eta\in\mathscr{P}_N}U_{\eta,\varepsilon}.$$
Therefore
\begin{align*}
    \int _ { \mathbb{D} }\Psi ( | Tf _ { n }| )   \omega {\rm d}A &=  \int _ { r\mathbb { D } } \Psi ( | Tf _ { n }  | ) \omega   {\rm d}A+ \int _ { \mathbb { D } \backslash r\mathbb { D }}\Psi ( | Tf _ { n }  | )  \omega  {\rm d}A  \\
  & \leq \int _ { r\mathbb { D } } \Psi ( | Tf _ { n }  | )  \omega  {\rm d}A+  \sum _ { \eta \in \mathscr { P } _ { N } }\int _ { U _ { \eta , \varepsilon }} \Psi ( | Tf _ { n }  | )   \omega {\rm d}A\\
   & =I _ { 1 } + \sum _ { \eta \in \mathscr { P } _ { N } } I _ { 2 }.
\end{align*}

We first estimate $I _ { 1 }$ and $I _ { 2 }$, respectively.
Since $\bigcup_{i=1}^{N}\varphi_{i}(r\mathbb { D })$  is a compact subset of $\mathbb { D }$,
and $f_{n}\to0$ uniformly on $\bigcup_{i=1}^{N}\varphi_{i}(r\mathbb { D })$, we obtain
\begin{align}\label{eq: I-1}
I_{1}\rightarrow0 \quad  as  \quad  n\rightarrow\infty.
\end{align}
For $I _ { 2 } $,
by \eqref{eq: operator-T}, \eqref{eq: subadditivity} and the non-decreasing of $\Psi$, we arrive at
\begin{align*}
   I_{2} & \leq \int_{U_{\eta,\varepsilon}}\Psi\bigg(\sum_{i=1}^{N-1}|s_{i}^{\eta}||T_{\eta_{i},\eta_{i+1}}f _ { n } |+|s_{N}^{\eta}||T_{\eta_{N}}f _ { n } |\bigg)\omega {\rm d}A \\
   & \lesssim\sum_{i=1}^{N-1}\int_{U_{\eta,\varepsilon}}\Psi\bigg(|s_{i}^{\eta}||T_{\eta_{i},\eta_{i+1}}f _ { n } |\bigg) \omega{\rm d}A+\int_{U_{\eta,\varepsilon}}\Psi\bigg(|s_{N}^{\eta}||T_{\eta_{N}}f _ { n } |\bigg)\omega {\rm d}A.
\end{align*}
Since the terms with $s_{i}^{\eta}=0$ vanish in the decomposition of $Tf_n$, we need only consider those indices $i$ for which $|s_i^\eta|>0$.
 Hence, by \eqref{eq: remark 1}, we have
\begin{align*}
    I_{2}&\lesssim\sum_{i=1}^{N-1}\int_{U_{\eta,\varepsilon}}\Psi\bigg(|T_{\eta_{i},\eta_{i+1}}f _ { n } |\bigg) \omega{\rm d}A+\int_{U_{\eta,\varepsilon}}\Psi\bigg(|T_{\eta_{N}}f _ { n } |\bigg) \omega{\rm d}A\\
   &=\sum_{i=1}^{N-1} J_{1}+J_{2}.
\end{align*}

 We now estimate $J_{1}$ and $J_2$.
For $ z\in U_{\eta,\varepsilon}$, $Q_{\eta}(z)\leq\varepsilon$. It follows from \eqref{eq: define-Q} that $M_{\eta_{i},\eta_{i+1}}(z)\lesssim\varepsilon$.
Let $K=U_{\eta,\varepsilon}$, we get $$\sup_{z\in K}M_{\eta_{i},\eta_{i+1}}(z)\lesssim\varepsilon.$$ Applying Lemma \ref{le: symbols-difference}, there exists a constant
 $h_{i}(\varepsilon)>0$ such that $\lim\limits_{\varepsilon\rightarrow0}h_{i}(\varepsilon)=0$ and
\begin{align*}
   \sum_{i=1}^{N-1} J_{1}& \leq  \sum_{i=1}^{N-1} h_{i}(\varepsilon)\int_{\mathbb{D}}\Psi(|f_{n}|)\omega{\rm d}A\leq \sum_{i=1}^{N-1} h_{i}(\varepsilon).
\end{align*}
For $J_{2}$, a similar argument gives $R_{\eta_{N}}(z)\lesssim \varepsilon,\,\,z\in U_{\eta,\varepsilon}$. Set $H(z)=\chi_{U_{\eta,\varepsilon}}(z)$. Then  $$\sup_{z\in\mathbb{D} }H(z)R_{\eta_{N}}(z)\leq\sup_{z\in U_{\eta,\varepsilon} }R_{\eta_{N}}(z)\lesssim\varepsilon.$$ By Lemma \ref{le: Orlicz-Carleson}, there exists constants $C>0$ and $\delta$, independent of $\varepsilon$, such that
\begin{align*}
 J_{2}
   & = \int_{\mathbb{D} }\Psi(|f_{n}\circ\varphi_{\eta_{N}}|) H\omega{\rm d}A \\
   & \leq  C \varepsilon^{\delta}\int_{\mathbb { D }}\Psi(|f_{n}|) \omega{\rm d}A  \leq  C \varepsilon^{\delta}.
\end{align*}
Hence
\begin{align}\label{eq: I-2}
I _ { 2}  \leq\sum_{i=1}^{N-1}  h_{i}  ( \varepsilon ) + C\varepsilon ^{\delta}.
\end{align}
Combining \eqref{eq: I-1} and \eqref{eq: I-2}, we obtain
$$\limsup_{n\rightarrow\infty}\int_\mathbb { D }\Psi( |Tf_n|)\omega {\rm d}A\leq\sum_{i=1}^{N-1}h_{i}  ( \varepsilon ) + C\varepsilon ^{\delta}.$$
The constant suppressed in the estimate is independent of $\varepsilon$. Taking the limit $\varepsilon\rightarrow0$, we have $\lim_{n\rightarrow\infty}\int_\mathbb { D }\Psi( |Tf_n|)\omega {\rm d}A=0$.
Therefore, $T$ is compact on $A^{\Psi}_{\omega}(\mathbb{D})$. This completes the proof of Theorem 1.1.

\section{ Proof of Theorem 1.2}

In what follows, we will devoted to the proof of Theorem 1.2. Let $\eta\in\mathscr{P}_N$. By \eqref{eq:change-of-variable}, the corresponding joint pullback measure satisfies the following identity for any positive Borel function $g$ on $\mathbb{D}$:
\begin{align}\label{eq: measure}
  \int _ { \mathbb { D } } \Psi(|g| ) {\rm d} \mu _ {\Psi,\eta,\omega} & = \sum _ { i = 1 } ^{N-1} | s _ { i } ^{\eta} | \int _ {G _ { \eta } } \left[\Psi(|g\circ\varphi _ { \eta _ { i } } | )  +  \Psi(|g\circ\varphi _ { \eta _ { i+1 } } | )\right]\Psi( M_{\eta _ { i },\eta _ { i+1 }}) \omega {\rm d}A \nonumber\\
   & +| s _ {N } ^{\eta} |\int _ { G _ { \eta } } \Psi(|g\circ\varphi _ { \eta_ { N } }| )\Psi( R _ { \eta _ { N } } ) \omega {\rm d}A.
\end{align}

The following lemma plays a vital role in the proof of Theorem 1.2.
\begin{lem}\label{le: vanishing-Carleson-measure}
Let $\Psi$ be an Orlicz function satisfying the $\Delta_{2}$-condition, $\omega\in \mathcal{D}$ and $\varphi, \psi \in S(\mathbb{D})$. Suppose that $H : \mathbb{D} \to [0,1]$ be a Borel function. Define measures $\mu$ and $\nu$ on $\mathbb{D}$ by
$$\mu (E) =  \int _ { \varphi ^{- 1} (E) }\Psi ( M _ { \varphi, \psi  } )  H \omega{\rm d} A +  \int _ { \psi ^{- 1}(E) } \Psi ( M _ { \varphi, \psi  } )  H \omega {\rm d} A $$
and
$$\nu (E) = \int _ { \varphi ^{- 1} (E)} \Psi ( \rho _ { \varphi, \psi  } ) H \omega {\rm d} A  + \int _ { \psi ^{- 1} (E)} \Psi ( \rho _ { \varphi, \psi  } ) H  \omega{\rm d} A ,$$
for any Borel set $E\subset \mathbb{D}$. If $\mu$ is a vanishing Carleson measure for $A_{\omega}^{\Psi}(\mathbb{D})$, then so is $\nu$.
\end{lem}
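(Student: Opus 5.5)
Since both $\mu$ and $\nu$ are finite positive Borel measures, I will work with the geometric characterization in Lemma \ref{le:Carleson measure}(ii). The goal is to show that for a fixed $r\in(r_0,1)$,
$$\lim_{|a|\to1}\frac{\nu(E_r(a))}{\omega(E_r(a))}=0,$$
assuming the same limit holds for $\mu$. By symmetry in $\varphi$ and $\psi$, it suffices to handle the first piece
$$\nu_1(E_r(a))=\int_{\varphi^{-1}(E_r(a))}\Psi(\rho_{\varphi,\psi})H\omega\,{\rm d}A.$$
The idea is to compare it pointwise with $\Psi(M_{\varphi,\psi})$ wherever $R_\varphi+R_\psi$ is not small. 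I fix a small $t\in(0,1)$ and split $\varphi^{-1}(E_r(a))$ into two sets:
$$A_t=\{z: R_\varphi(z)+R_\psi(z)\ge t\}\quad\text{and}\quad B_t=\{z: R_\varphi(z)+R_\psi(z)<t\}.$$

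On $A_t$ we have $\rho_{\varphi,\psi}\le t^{-1}M_{\varphi,\psi}$. The $\Delta_2$-condition, via the positive upper type form \eqref{eq: positive upper type} with $s=1/t$, gives $\Psi(\rho_{\varphi,\psi})\le C t^{-p}\Psi(M_{\varphi,\psi})$. Therefore the integral over $A_t$ is at most $Ct^{-p}\mu(E_r(a))$. After dividing by $\omega(E_r(a))$, this tends to $0$ as $|a|\to1$ for each fixed $t$.

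On $B_t$ we have $R_\varphi<t$. Since $\rho_{\varphi,\psi}<1$, we get $\Psi(\rho_{\varphi,\psi})\le\Psi(1)$. Hence the integral over $B_t$ is at most
$$\Psi(1)\int_{\varphi^{-1}(E_r(a))}\chi_{B_t}\,\omega\,{\rm d}A.$$
To control this I will repeat the estimate from the proof of Lemma \ref{le: Orlicz-Carleson} with $H'=\chi_{B_t}$, which satisfies $\sup_z H'(z)R_\varphi(z)\le t$. That proof showed, for $\delta=\min\{\alpha/2,1\}$ and $r>\max\{r_0(\omega),r_0(\omega_\delta)\}$, that
$$\int_{\varphi^{-1}(E_r(\xi))}H'\omega\,{\rm d}A\lesssim t^{\delta}\,\omega(E_r(\xi))$$
uniformly in $\xi$. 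This was derived by comparing with $\mu_{\varphi,\omega_\delta}$, which is a Carleson measure because $\omega_\delta\in\mathcal D$ (Lemma \ref{le:modified doubling weight}). So the $B_t$ contribution divided by $\omega(E_r(a))$ is at most $C t^{\delta}$, uniformly in $a$. The same holds for the $\psi^{-1}$ part using $R_\psi<t$.

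Combining the two pieces gives
$$\limsup_{|a|\to1}\frac{\nu(E_r(a))}{\omega(E_r(a))}\le C t^{\delta}.$$
Letting $t\to0$ and invoking Lemma \ref{le:Carleson measure}(ii) shows that $\nu$ is vanishing Carleson. The main point requiring care is the $B_t$ bound: I must make sure the intermediate estimate $\mu(E_r(\xi))\lesssim\varepsilon^\delta\omega(E_r(\xi))$ in Lemma \ref{le: Orlicz-Carleson} holds uniformly, and I must choose $r$ large enough that Lemma \ref{le:Carleson measure} applies to both $\omega$ and $\omega_\delta$. Since vanishing does not depend on which admissible $r$ is used, taking such an $r$ is harmless.
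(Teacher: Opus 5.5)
Your argument is correct. Its core matches the paper's proof: the same split according to whether $R_\varphi+R_\psi$ is small, the same pointwise bound $\Psi(\rho_{\varphi,\psi})\le Ct^{-p}\Psi(M_{\varphi,\psi})$ off that set, and the same $t^{\delta}$ smallness coming from Lemma \ref{le: Orlicz-Carleson} on it.

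The difference is the level at which you conclude. The paper first proves the measure inequality $\nu\le\nu_\varepsilon+C_\varepsilon\mu$. It then tests against bounded sequences $f_n\to0$ locally uniformly, using Lemma \ref{le: Orlicz-Carleson} exactly as stated (as an integral inequality) together with the equivalence of modular and norm convergence for $\Delta_2$ functions. You instead stay with measures of pseudohyperbolic disks. You prove $\nu(E_r(a))\le Ct^{-p}\mu(E_r(a))+Ct^{\delta}\omega(E_r(a))$ and invoke Lemma \ref{le:Carleson measure}(ii) in both directions.

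What your route buys: it avoids test functions and Luxemburg norms entirely, and it yields an explicit quantitative disk inequality. What it costs:
\begin{itemize}
\item You must reach inside the proof of Lemma \ref{le: Orlicz-Carleson} for the uniform estimate $\int_{\varphi^{-1}(E_r(\xi))}\chi_{B_t}\,\omega\,{\rm d}A\lesssim t^{\delta}\,\omega(E_r(\xi))$. That proof does establish it, via $\mu_{\varphi,\omega_\delta}$ being a Carleson measure.
\item You need $\mu$ and $\nu$ to be finite. This holds because $M_{\varphi,\psi}$ is bounded by Schwarz--Pick and $\rho_{\varphi,\psi}<1$.
\item You need a single $r>\max\{r_0(\omega),r_0(\omega_\delta)\}$. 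This is available because Lemma \ref{le:Carleson measure} is valid for every fixed $r\in(r_0,1)$.
\end{itemize}
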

\begin{proof}
For $0<\varepsilon<1$, let $K_{\varepsilon}=\{z\in\mathbb{D}: R_{\varphi}(z)+R_{\psi}(z)\leq\varepsilon\}$. For any Borel set $E\subset \mathbb{D}$, put
\begin{align*}
  \int _ { \varphi ^{- 1} (E)} \Psi ( \rho _ { \varphi, \psi  } ) H \omega{\rm d} A  & =\int _ { \varphi ^{- 1} (E)\cap K_{\varepsilon}} \Psi ( \rho _ { \varphi, \psi  } ) H \omega {\rm d} A +\int _ { \varphi ^{- 1} (E)\setminus K_{\varepsilon}} \Psi ( \rho _ { \varphi, \psi  } ) H \omega {\rm d} A  \\
   & =\int _ {\varphi^{- 1} (E) } \chi_{ K_{\varepsilon}}\Psi ( \rho _ { \varphi, \psi  } ) H \omega {\rm d} A+\int _ { \varphi ^{- 1} (E)\setminus K_{\varepsilon}} \Psi ( \rho _ { \varphi, \psi  } ) H \omega {\rm d} A .
\end{align*}
For $z\notin K_{\varepsilon}$. By the non-decreasing of $\Psi$ and \eqref{eq: remark 1}, there exists a constant $C_{\varepsilon}>0$ such that 
\begin{align*}
  \int _ { \varphi ^{- 1} (E)}\Psi ( \rho _ { \varphi, \psi  } ) H  \omega{\rm d} A  & \leq\int _ {\varphi^{- 1} (E) } \chi_{ K_{\varepsilon}}\Psi ( \rho _ { \varphi, \psi  } )H \omega {\rm d} A  +C_{\varepsilon}\int _ { \varphi ^{- 1} (E)}\Psi(M_{\varphi,\psi})H \omega {\rm d} A.
\end{align*}
The same estimate holds with $\varphi$ replaced by $\psi$. 
Let
$$\nu_{\varepsilon}(E) =\int _ {\varphi^{- 1} (E) } \chi_{ K_{\varepsilon}}\Psi ( \rho _ { \varphi, \psi  } ) H \omega {\rm d} A +\int _ {\psi^{- 1} (E) } \chi_{ K_{\varepsilon}}\Psi ( \rho _ { \varphi, \psi  } ) H  \omega{\rm d} A .$$
Therefore
\begin{align}\label{eq:measure-inequality}
  \nu(E) & \leq\nu_{\varepsilon}(E) +C_{\varepsilon}\mu (E).
\end{align}
Let $H_{1}=\chi_{ K_{\varepsilon}}H\leq1 $. So
$$\sup_{z\in\mathbb{D}}H_{1}(z)R_{\varphi}(z)\leq\sup_{z\in K_{\varepsilon}}R_{\varphi}(z)\leq\sup_{z\in K_{\varepsilon}}(R_{\varphi}(z)+R_{\psi}(z))\leq\varepsilon.$$
Similarly,
$$\sup_{z\in\mathbb{D}}H_{1}(z)R_{\psi}(z)\leq\varepsilon.$$
Let $ \{f_{n}\}$ be an arbitrary sequence in $A^{\Psi}_{\omega}(\mathbb{D})$ such that $\sup_{n}\|f_{n}\|_{A^{\Psi}_{\omega}(\mathbb{D})}\leq1$ and $f_{n}\to0$ uniformly on compact sets of $\mathbb{D}$. By Lemma \ref{le: Orlicz-Carleson}, there exists a constant $C_{1}>0$ and $\delta>0$, independent of $\varepsilon$, such that
\begin{align*}
   \int _ {\mathbb{D}}\Psi(|f_{n}\circ \varphi|) H_{1} \omega {\rm d} A & \leq C_{1}\varepsilon^{\delta}\int _ {\mathbb{D}}\Psi(|f_{n}|)\omega {\rm d} A \leq C_{1}\varepsilon^{\delta}
\end{align*}
and $$\int _ {\mathbb{D}}\Psi(|f_{n}\circ\psi|)H_{1} \omega {\rm d} A\leq C_{1}\varepsilon^{\delta}.$$
Since  $0\leq \rho<1$ and $\Psi$ is non-decreasing, it follows from the definition of $\nu_{\varepsilon} $ that
\begin{align*}
  \int _ {\mathbb{D}}\Psi(|f_{n}|) {\rm d}\nu_{\varepsilon} 
   & =\int _ {\mathbb{D}}\Psi(|f_{n}\circ \varphi|) \chi_{ K_{\varepsilon}}\Psi ( \rho _ { \varphi,\psi } )H \omega {\rm d} A+\int _ {\mathbb{D}}\Psi(|f_{n}\circ\psi|)\chi_{ K_{\varepsilon}}\Psi ( \rho _ { \varphi,\psi } )H \omega {\rm d} A\\
   & =\int _ {\mathbb{D}}\Psi(|f_{n}\circ \varphi|) \Psi ( \rho _ { \varphi,\psi } )H_{1} \omega {\rm d} A+\int _ {\mathbb{D}}\Psi(|f_{n}\circ\psi|)\Psi ( \rho _ { \varphi,\psi } )H_{1} \omega {\rm d} A\\
   & \leq 2\Psi(1)C_{1}\varepsilon^{\delta}.
\end{align*}
Since $\mu$ is a vanishing Carleson measure for $A_{\omega}^{\Psi}(\mathbb{D})$,  we have
$$\lim_{n\to\infty}  \int _ {\mathbb{D}}\Psi(|f_{n}|) {\rm d}\mu=0. $$
It follows from \eqref{eq:measure-inequality} that
\begin{align*}
  \int _ {\mathbb{D}}\Psi(|f_{n}|) {\rm d}\nu & \leq \int _ {\mathbb{D}}\Psi(|f_{n}|) {\rm d}\nu_{\varepsilon}+C_{\varepsilon}\int _ {\mathbb{D}}\Psi(|f_{n}|) {\rm d}\mu \\
   & \leq 2\Psi(1)C_{1}\varepsilon^{\delta}+C_{\varepsilon}\int _ {\mathbb{D}}\Psi(|f_{n}|) {\rm d}\mu.
\end{align*} 
For each fix $\varepsilon>0$, letting $n\to\infty$ yields
$$\limsup_{n\to\infty} \int _ {\mathbb{D}}\Psi(|f_{n}|) {\rm d}\nu \leq 2\Psi(1)C_{1}\varepsilon^{\delta}.$$
Since $\varepsilon>0$ is arbitrary. Taking the limit $\varepsilon\to0$, we obtain  $\lim_{n\to\infty}\int _ {\mathbb{D}}\Psi(|f_{n}|) {\rm d}\nu=0$.
By \cite[Theorem 9.4]{KR}, it follows that $\lim_{n\to\infty}\| f_{n}\|_{L^{\Psi}_{\nu}(\mathbb{D})}=0$. This implies that the embedding $I_{d} :A^{\Psi}_{\omega}(\mathbb{D})\rightarrow L^{\Psi}_{\nu}(\mathbb{D})$ is compact, so $\nu$ is a vanishing Carleson measure for $A_{\omega}^{\Psi}(\mathbb{D})$.

\end{proof}

We now turn to the proof of Theorem 1.2.

\textbf{Proof of Theorem 1.2}.
The implication ${\rm (b)}\Rightarrow{\rm (c)}$ follows directly from Lemma \ref{le: vanishing-Carleson-measure}, while ${\rm (c)}\Rightarrow{\rm (b)}$ follows from the boundedness of  $R _ { i }$ for all $i\in \Lambda _ { N }$.

Proof of ${\rm (a)}\Rightarrow{\rm (b)}$.
By the definition of $\mu_{\Psi,\omega}$, $\mu_{\Psi,\omega}$ is a vanishing Carleson measure for $A_{\omega}^{\Psi}(\mathbb{D})$ if and only if $\mu_{\Psi,\eta,\omega}$ is a vanishing Carleson measure for each $\eta \in \mathscr { P } _ { N }$. Thus, it suffices to prove that $\lim_{|z|\to1}\frac{\mu_{\Psi,\eta,\omega}(E_{r}(z))}{\omega(E_{r}(z))}=0$.

 For any Borel set $E\subset\mathbb{D}$, the definition of $\mu_{\Psi,\eta_{i},\omega}$ gives
\begin{align*}
  \mu _{\Psi,\eta,\omega} (E) =& \sum _ { i = 1 } ^{N} | s _ { i } ^{\eta} |  \mu _ { \Psi,\eta _ { i },\omega } (E)=\sum _ { i = 1 } ^{N-1} | s _ { i } ^{\eta} |  \mu _ { \Psi,\eta _ { i },\omega } (E)+| s _ { N } ^{\eta} |  \mu _ { \Psi,\eta _ { N},\omega } (E) \\
   =& \sum _ { i = 1 } ^{N-1} \bigg(\int  _{\varphi _ { \eta _ { i } } ^{- 1} (E) \cap G _ { \eta } } | s _ { i } ^{\eta} |\Psi(M_{\eta _ { i },\eta _ { i+1 }})\omega{\rm d} A +\int  _{\varphi _ { \eta _ { i+1 } } ^{- 1} (E) \cap G _ { \eta } }| s _ { i } ^{\eta} |\Psi( M_{\eta _ { i },\eta_ { i+1 }}) \omega{\rm d} A\bigg)\\
   &+\int _ {   \varphi _ { \eta_ { N } } ^{- 1} (E) \cap G _ { \eta }} | s _ { N } ^{\eta} |\Psi( R _ { \eta _ { N } } ) \omega {\rm d} A .
\end{align*}
If $| s _ { i } ^{\eta} |=0$ for some $1\leq i\leq N$, the corresponding term vanishes. Thus, it suffices to consider indices $i$ for which $| s _ { i } ^{\eta} |>0$. It follows from \eqref{eq: define-Q} that $Q _ { \eta}\geq| s _ { i } ^{\eta} |  M _ { \eta _ { i } , \eta _ { i + 1 } }$ and  $Q _ { \eta }\geq | s _ { N } ^{\eta} |  R _ { \eta _ { N } }$. Combining these inequalities with \eqref{eq: remark 1}, we obtain
\begin{align*}
  \mu _{\Psi,\eta,\omega} (E) &\lesssim\sum _ { i = 1 } ^{N-1} \bigg(\int  _{\varphi _ { \eta _ { i } } ^{- 1} (E)\cap G _ { \eta } } \Psi(Q _ { \eta })\omega{\rm d} A +\int  _{\varphi _ { \eta _ { i+1 } } ^{- 1} (E) \cap G _ { \eta } }\Psi(Q _ { \eta})\omega {\rm d} A\bigg)+\int _ {   \varphi _ {\eta _ { N } } ^{- 1} (E) \cap G _ { \eta }} \Psi(Q _ { \eta }) \omega {\rm d} A \\
  & \leq 2\sum _ {i = 1 } ^{N} \int _ { \varphi _ { i } ^{- 1} (E) \cap G _ { \eta } } \Psi(Q _ { \eta }) \omega  {\rm d} A.
\end{align*}
Fix $r\in (r_{0}(\omega),1)$. For each $z\in \mathbb { D }$, set $E=E_{r}(z)$, we obtain
\begin{align}\label{eq:mu-Psi-estimate}
  \mu _{\Psi,\eta,\omega} (E_{r}(z)) & \lesssim2\sum _ { i = 1 } ^{N} \int _ { \varphi _ { i } ^{- 1} (E_{r}(z)) \cap G _ {\eta} } \Psi(Q _ { \eta })  \omega {\rm d} A.
\end{align}
We next estimate $\Psi(Q _ { \eta }) $ on $ \varphi _ { i } ^{- 1} ( E_{r}(z)) \cap G _ { \eta }$.
Let $\eta \in \mathscr{P}_{N}$. It follows from \eqref{eq: define-G} that
$$Q _ { \eta } (\xi)\leq Q _ { \tau } (\xi) \qquad (\xi \in G _ {\eta })$$
for any $\tau\in \mathscr{P}_{N}$. Taking the product over all $\tau\in \mathscr{P}_{N}$, we obtain
$$( Q _ { \eta } (\xi) ) ^{N !}=\Pi_{\tau\in \mathscr{P}_{N}}Q _ { \eta }(\xi)\leq \Pi_{\tau\in \mathscr{P}_{N}}Q _ { \tau } (\xi)=Q (\xi).$$
Let $$ \mathbb {D}^{t}=\{z\in\mathbb{D}:t<|z|<1\}.$$
Since $T$ is compact on $A^{\Psi}_{\omega}(\mathbb{D})$, Theorem \ref{th: Julia-Caratheodory-type} implies that $\lim\limits_{|\xi|\to1}Q(\xi)=0$.
Thus, for any $\varepsilon > 0$, there exists $t \in (0,1)$ such that $Q (\xi)<\varepsilon^{N !}$ for $|\xi|>t$.
It follows that
\begin{align}\label{eq:Q-eta-estimate}
  Q _ { \eta} (\xi) \leq ( Q (\xi) ) ^{\frac { 1} { N ! } } & < \varepsilon,   \,\,\,\,\,\,\,\xi \in G _ { \eta } \cap \mathbb { D } ^{t}.
\end{align}
Choose $a \in (0,1)$  such that
$$\varphi _ { i } ( t \mathbb { D } ) \subset a\mathbb { D } \,\,\,\,\,\text{or equivalently}\,\,\,\,\,\, \varphi _ { i } ^{- 1} ( \mathbb { D } ^{a } ) \subset \mathbb { D } ^{t}$$
for $1\leq i\leq N$.

For $a\in(0,1)$, it follows from \eqref{eq:Euclidean disk} that there exists $r_1\in(0,1)$ such that $E_{r}(z) \subset \mathbb{D}^{a}$ for $z\in\mathbb D^{r_1}$.
Thus $$\varphi _ { i } ^{- 1}(E_{r}(z))\subset \varphi _ { i } ^{- 1}(\mathbb{D}^{a})\subset\mathbb { D } ^{t},  \qquad z\in\mathbb D^{r_1}.$$
This implies that
$$\varphi _ { i } ^{- 1}(E_{r}(z))\cap G _ { \eta }\subset\mathbb { D } ^{t}\cap G _ { \eta }.$$
Combining this with \eqref{eq:Q-eta-estimate}, we obtain
 $$Q _ { \eta } (\xi)< \varepsilon,  \qquad  \xi \in \varphi _ { i } ^{- 1}(E_{r}(z))\cap G _ { \eta}, \qquad z\in\mathbb D^{r_1}$$
for $1\leq i\leq N$. Hence, by the non-decreasing of $\Psi$ and \eqref{eq:mu-Psi-estimate}, we obtain
\begin{align*}
    \frac{\mu _{\Psi,\eta,\omega} (E_{r}(z))}{\omega(E_{r}(z))}  & \lesssim  2\Psi(\varepsilon)\sum _ { i= 1 } ^{N} \frac{\int _ { \varphi _ { i } ^{- 1} ( E_{r}(z)) } \omega {\rm d} A}{\omega(E_{r}(z))}\\
   & =2\Psi(\varepsilon)\sum _ { i = 1 } ^{N} \frac{\mu_{ \varphi _ { i },\omega} ( E_{r}(z)))}{\omega(E_{r}(z))} .
\end{align*}
Since $\omega\in \mathcal{D}$, $C_{\varphi}$ is bounded on $A^{p}(\omega)$ and $\mu_{\varphi, \omega}$ is a Carleson measure for $A^{p}(\omega)$. By Remark 2, $\mu_{\varphi, \omega}$ is also a Carleson measure for $A_{\omega}^{\Psi}(\mathbb D)$. By Lemma \ref{le:Carleson measure}, let $\varepsilon\to0$, we have 
 $$\lim_{|z|\to1}\frac{\mu_{\Psi,\eta,\omega}(E_{r}(z))}{\omega(E_{r}(z))}=0 .$$
Therefore, $\mu_{\Psi,\eta,\omega}$ is a vanishing  Carleson measure for  $A_{\omega}^{\Psi}(\mathbb D)$,
which implies that $\mu_{\Psi,\omega}$ is also a vanishing  Carleson measure $A_{\omega}^{\Psi}(\mathbb D)$.

Proof of ${\rm (b)}\Rightarrow{\rm (a)}$. To prove $T$ is compact on $A^{\Psi}_{\omega}(\mathbb{D})$, let $\{f_{n}\}$ be an arbitrary sequence in $A^{\Psi}_{\omega}(\mathbb{D})$ such that $\sup_{n}\|f_ {n}\|_{A^{\Psi}_{\omega}(\mathbb{D})} \leq1$ and $f_{n}\rightarrow0$ uniformly on compact subsets of $\mathbb{D}$. By Lemma \ref{le:compactness}, it suffices to show that $\|Tf_ {n}\|_{A^{\Psi}_{\omega}(\mathbb{D})}\rightarrow0$.
Fix $r \in (0,1)$.
Since $r\mathbb{D}$ is a compact subset of $\mathbb{D}$,
$\{f_{n}\}$ converges uniformly to $0$ on $r\mathbb{D}$. Noting that
$$ \mathbb { D }= \bigcup _ { \eta \in \mathscr { P } _ { N } } G _ {\eta}, $$
we have
$$\mathbb{D}^{r}=\bigg(\bigcup _ { \eta \in \mathscr { P } _ { N } } G _ { \eta }\bigg)\cap \mathbb{D}^{r}= \bigcup _ { \eta \in \mathscr { P } _ { N } } \bigg(G _ { \eta }\cap \mathbb{D}^{r}\bigg)=\bigcup _ { \eta \in \mathscr { P } _ { N } } G_{\eta}^{r}.$$
Hence
\begin{align*}
 \limsup _ { n \to \infty } \int _ { \mathbb { D } } \Psi(| T f _ { n } |)  \omega  {\rm d} A & = \limsup _ { n \to \infty } \int _ {\mathbb { D }^{r} } \Psi(| T f _ { n } |)  \omega  {\rm d} A+\limsup _ { n \to \infty } \int _ { r\mathbb{D} }\Psi(| T f _ { n } |)  \omega {\rm d} A \\
   & \leq \limsup _ { n \to \infty } \sum _ {\eta \in \mathscr { P } _ { N }}\int_{G _ { \eta } ^{r} }  \Psi(| T f _ { n } |)  \omega {\rm d} A .
\end{align*}

Let $$T_{1}= \sum _ { i = 1 } ^{N - 1} s _ { i } ^{\eta} T _ { \eta _ { i } , \eta_ { i + 1 } },\,\,\,\,T_{2}=  s _ { N } ^{\eta} T _ { \eta _ {N}}. $$
By the non-decreasing of $\Psi$ and \eqref{eq: subadditivity}, we have
$$\Psi(|T_{1}f _ { n }|)\leq\Psi(\sum _ { i = 1 } ^{N - 1} |s _ { i } ^{\eta}| |T _ { \eta_ { i } ,\eta _ { i + 1 } } f _ { n }|)\lesssim\sum _ { i = 1 } ^{N - 1}\Psi( |s _ { i } ^{\eta}| |T _ { \eta_ { i } , \eta _ { i + 1 } } f _ { n }|).$$
For $0\leq|s _ { i } ^{\eta}|\leq1$, the convexity of $\Psi$ gives
$$\Psi( |s _ { i } ^{\eta}| |T _ { \eta _ { i } , \eta_ { i + 1 } } f _ { n }|)\leq|s _ { i } ^{\eta}|\Psi(  |T _ { \eta _ { i } , \eta_ { i + 1 } } f _ { n }|).$$
For $|s _ { i } ^{\eta}|>1$, the $\Delta_{2}$-condition yields
$$\Psi( |s _ { i } ^{\eta}| |T _ { \eta _ { i } , \eta _ { i + 1 } } f _ { n }|)\lesssim|s _ { i } ^{\eta}|^{p}\Psi(  |T _ { \eta _ { i } , \eta _ { i + 1 } } f _ { n }|)=|s _ { i } ^{\eta}|^{\max\{p,1\}-1} |s _ { i } ^{\eta}|\Psi(  |T _ { \eta _ { i } , \eta_ { i + 1 } } f _ { n }|).$$
Therefore
$$\Psi( |s _ { i } ^{\eta}| |T _ { \eta _ { i } , \eta_ { i + 1 } } f _ { n }|)\lesssim\max\{1,|s _ { i } ^{\eta}|^{\max\{p,1\}-1}\}|s _ { i } ^{\eta}|\Psi(  |T _ { \eta _ { i } , \eta _ { i + 1 } } f _ { n }|)\,\,\,\,\, \mbox{for}\,|s _ { i } ^{\eta}|\geq0, \,1\leq i< N.$$
It follows that
\begin{align*}
\int_{G _ { \eta } ^{r} }  \Psi(| T_{1} f _ { n } |) \omega  {\rm d} A &\lesssim \sum _ { i = 1 } ^{N - 1} |s _ { i } ^{\eta}| \int_{G _ { \eta } ^{r} } \Psi(  |T _ { \eta_ { i } , \eta _ { i + 1 } } f _ { n }|)\omega {\rm d} A \\
       & = \sum _ { i = 1 } ^{N - 1} |s _ { i } ^{\eta}|(I _ { 1n } ^{\eta, \varepsilon, i} + I _ { 2n } ^{\eta , \varepsilon , i} ),
\end{align*}
where
\begin{align}\label{eq: I-1n}
  I _ { 1n } ^{\eta, \varepsilon , i}  & =\int _ {G _ { \eta } ^{r} \cap \{ M _ { \eta _ { i } , \eta _ { i + 1 } } \geq \varepsilon\} }\Psi(|T _ { \eta _ { i } , \eta _ { i + 1 } }f _ { n } |)  \omega {\rm d} A,
\end{align}
 \begin{align}\label{eq: I-2n}
 I _ { 2n } ^{\eta, \varepsilon , i} &=\int_{G _ {\eta } ^{r} \cap \{ M _ {\eta _ { i } , \eta _ { i + 1 } } < \varepsilon \}}  \Psi(| T _ { \eta _ { i } , \eta _ { i + 1 } } f _ { n } |) \omega {\rm d} A.
\end{align}
By the same argument, we  also obtain
$$\Psi(|T_{2}f _ { n }|)= \Psi(| s _ { N } ^{\eta} T _ { \eta _ {N}} f _ { n }|)\lesssim|s _ { N } ^{\eta}|\Psi(  |T _ { \eta _ { N } } f _ { n }|).$$
Hence
\begin{align*}
\int_{G _ { \eta } ^{r} }  \Psi(| T_{2} f _ { n } |)  \omega {\rm d} A & \lesssim|s _ { N } ^{\eta}| \int_{G _ { \eta } ^{r} }  \Psi(  |T _ {\eta _ { N } } f _ { n }|) \omega {\rm d} A \\
 & = |s _ { N } ^{\eta}|(I _ { 1n } ^{\eta , \varepsilon, N} + I _ { 2n } ^{\eta, \varepsilon , N} ),
\end{align*}
where
\begin{align}\label{eq: I-1N}
  I _ { 1n } ^{\eta ,\varepsilon , N}  &=\int _ {G _ { \eta } ^{r} \cap \{ R _ { \eta _ { N }  } \geq \varepsilon\} }\Psi(|T _ { \eta _ { N } }f _ { n } |) \omega {\rm d} A,
\end{align}
\begin{align}\label{eq: I-2N}
I _ { 2n } ^{\eta , \varepsilon , N} &=\int_{G _ {\eta} ^{r} \cap \{ R _ {\eta _ { N } } < \varepsilon \}} \Psi(| T _ {\eta _ { N } } f _ { n } |)  \omega {\rm d} A  .
\end{align}
Thus, for $T=T_{1}+T_{2}$, we deduce that
\begin{align*}
  \int_{G _ { \eta } ^{r} }  \Psi(| T f _ { n } |)  \omega {\rm d} A   &\lesssim \sum _ { i = 1 } ^{N } |s _ { i } ^{\eta}| (I _ {1 n } ^{\eta, \varepsilon, i} + I _ {2 n } ^{\eta , \varepsilon , i} ) .
\end{align*}

To estimate \eqref{eq: I-1n}, it follows from the non-decreasing, convexity of $\Psi$ and $\Delta_{2}$-condition that
\begin{align*}
  I _ { 1n } ^{\eta, \varepsilon , i} 
   & \leq \frac{ 1 }{\Psi(\varepsilon)}\int _ {G _ {\eta} }\Psi(|T _ { \eta_ { i } , \eta _ { i + 1 } }f _ { n } |)\Psi(M _ { \eta _ { i } ,\eta _ { i + 1 } }) \omega {\rm d} A \\
   &\lesssim \frac{ 1 }{\Psi(\varepsilon)}\int _ {G _ { \eta } }\bigg(\Psi(|T _ {\eta _ { i } }f _ { n }|)+ \Psi(|T _ {\eta_ { i + 1 } }f _ { n } |)\bigg)\Psi(M _ { \eta _ { i } , \eta _ { i + 1 } }) \omega {\rm d} A.
\end{align*}
For \eqref{eq: I-1N}, by the non-decreasing of $\Psi$, we get
\begin{align*}
  I _ { 1n } ^{\eta , \varepsilon , N} 
   & \leq \frac{ 1 }{\Psi(\varepsilon)}\int _ {G _ {\eta } }\Psi(|T _ { \eta_ { N }  }f _ { n } |)\Psi( R _ { \eta _ { N }  } ) \omega {\rm d}A .
\end{align*}
Hence, for $ 1 \leq i \leq N$, it follows from \eqref{eq: measure} that
\begin{align*}
  \sum _ { i = 1 } ^{N } |s _ { i } ^{\eta}|I _ {1 n } ^{\eta, \varepsilon, i} & =\sum _ { i = 1 } ^{N-1 } |s _ { i } ^{\eta}|I _ { 1n } ^{\eta , \varepsilon, i} + |s _ { N } ^{\eta}|I _ { 1n } ^{\eta , \varepsilon, N} \\
   & \lesssim \frac{ 1 }{\Psi(\varepsilon)}\bigg(\sum _ { i = 1 } ^{N-1 } |s _ {i } ^{\eta}|\int _ {G _ { \eta } }\bigg(\Psi(|T _ { \eta_ { i } }f _ { n }|)+ \Psi(|T _ {\eta_ { i + 1 } }f _ { n } |)\bigg)\Psi(M _ { \eta _ { i } , \eta _ { i + 1 } }) \omega {\rm d} A \\
   &+|s _ { N } ^{\eta}|\int _ {G _ { \eta } }\Psi(|T _ {\eta_ { N }  }f _ { n } |)\Psi( R _ { \eta _ { N }  } ) \omega {\rm d} A \bigg)  \\
   & =\frac{ 1 }{\Psi(\varepsilon)}\int _ {\mathbb{D}}\Psi(|f _ { n } |)  {\rm d} \mu_{\Psi,\eta,\omega}.
\end{align*}
Since $\mu_{\Psi,\omega}$ is a vanishing Carleson measure for $A^{\Psi}_{\omega}(\mathbb{D})$ and $\mu_{\Psi,\omega}= \sum _ { \eta \in \mathscr { P } _ { N } } \mu _{\Psi,\eta,\omega}$, $\mu_{\Psi,\eta,\omega}$ is also a vanishing Carleson measure for $A^{\Psi}_{\omega}(\mathbb{D})$. Note that $\sup\limits_{n}\|f_ {n}\|_{A^{\Psi}_{\omega}(\mathbb{D})}
 \leq1$ and $f_{n}\rightarrow0$ uniformly on compact subsets of $\mathbb{D}$,  it follows from \cite[Theorem 9.4]{KR} that $\lim\limits_{n\rightarrow\infty} \int _ {\mathbb{D}}\Psi(|f _ { n } |)  {\rm d} \mu_{\Psi,\eta,\omega}=0$.
Hence
$$\lim_{n\rightarrow\infty}\sum _ { i = 1 } ^{N } |s _ { i } ^{\eta}|I _ { 1n } ^{\eta , \varepsilon, i}=0.$$

For \eqref{eq: I-2n},
let $K=G _ { \eta } ^{r} \cap \{ M _ { \eta _ { i } ,\eta_ { i + 1 } } < \varepsilon \}\subset\mathbb{D}$, then $\sup\limits_{z\in K} M _ { \eta_ { i } ,\eta_ { i + 1 } } \leq\varepsilon$. By Lemma \ref{le: symbols-difference}, there exists a constant  $h_{\eta}(\varepsilon)=\sum _ { i = 1 } ^{N-1 } |s _ { i } ^{\eta}| h_{i}(\varepsilon) >0$ such that $\lim\limits_{\varepsilon\rightarrow0}h_{\eta}(\varepsilon)=0$ and
\begin{align*}
  \sum _ { i = 1 } ^{N-1 } |s _ { i } ^{\eta}| I _ { 2n } ^{\eta, \varepsilon , i} 
   & = \sum _ { i = 1 } ^{N-1 } |s _ { i } ^{\eta}|\int_{K}\Psi(|T _ { \eta _ { i }}f _ { n } - T _ {\eta _ { i + 1 } }f _ { n } |) \omega{\rm d}A \\
   & \leq \sum _ { i = 1 } ^{N-1 } |s _ { i } ^{\eta}|h_{i}(\varepsilon) \int_{\mathbb{D}}  \Psi(| f _ { n } |)  \omega {\rm d} A  \leq h_{\eta}(\varepsilon) .
\end{align*}
For \eqref{eq: I-2N},
let $H=\chi_{G _ { \eta } ^{r} \cap \{ R _ { \eta_ { N } } < \varepsilon \}}\leq1$, then $\sup\limits_{z\in\mathbb{D}}H(z) R _ { \eta _ { N }}(z)\leq\varepsilon$. By Lemma \ref{le: Orlicz-Carleson}, there exists a constant $C=|s _ {N} ^{\eta}|C_{1}>0$ and $\delta>0$, independent of $\varepsilon$, such that
\begin{align*}
 |s _ {N} ^{\eta}| I_ { 2n } ^{\eta, \varepsilon , N} 
   & =  |s _ {N} ^{\eta}| \int_{\mathbb{D}}  \Psi(| T _ { \eta _ { N }  } f _ { n } |) H \omega {\rm d} A\\
   & \leq |s _ {N} ^{\eta}|C_{1}\varepsilon^{\delta}\int_{\mathbb{D}} \Psi(| f _ { n } |) \omega {\rm d} A\leq C\varepsilon^{\delta}.
\end{align*}
Combining the estimates above, we obtain
\begin{align*}
  \limsup_{n\rightarrow\infty}\int_{G _ { \eta} ^{r} }  \Psi(| T f _ { n } |)  \omega {\rm d} A &\leq \limsup_{n\rightarrow\infty}\sum _ { i= 1 } ^{N } |s _ { i } ^{\eta}| (I _ {1 n } ^{\eta , \varepsilon, i} + I  _ {2 n } ^{\eta, \varepsilon , i} ) \\
   & \leq  h_{\eta}(\varepsilon)+C\varepsilon^{\delta}.
\end{align*}
Taking the limit $\varepsilon\rightarrow0$, we have
$$\limsup_{n\rightarrow\infty}\int_{G _ { \eta } ^{r} }  \Psi(| T f _ { n } |)  \omega {\rm d} A=0.$$
 Hence $$\lim _ { n \to \infty } \int _ { \mathbb { D } } \Psi(| T f _ { n } |)    \omega{\rm d} A=0.$$
It follows from \cite[Theorem 9.4]{KR}  that $\lim_{ n\to \infty}\|Tf_{n}\|_{A^{\Psi}_{\omega}(\mathbb{D})}=0$. Consequently, $T$ is compact on $A^{\Psi}_{\omega}(\mathbb{D})$.
This completes the proof of Theorem 1.2.

\end{document}